\newtheorem*{rep@theorem}{\rep@title}
\newcommand{\newreptheorem}[2]{%
\newenvironment{rep#1}[1]{%
 \def\rep@title{#2 \ref{##1}}%
 \begin{rep@theorem}}%
 {\end{rep@theorem}}}
\newtheorem{intro_thm}{Theorem}
\newtheorem{intro_prop}[intro_thm]{Proposition}
\theoremstyle{plain}
\newtheorem{teor}{Theorem}[section]
\newtheorem{lem}[teor]{Lemma}
\newtheorem{cor}[teor]{Corollary}
\newtheorem{prop}[teor]{Proposition}
\theoremstyle{definition}
\newtheorem{deft}[teor]{Definition}
\theoremstyle{remark}
\newtheorem{oss}[teor]{Remark}
\DeclareMathOperator\upL{\textup{L}}
\DeclareMathOperator\bbH{\mathbb{H}}
\DeclareMathOperator\bbN{\mathbb{N}}
\DeclareMathOperator\bbR{\mathbb{R}}
\DeclareMathOperator\bbS{\mathbb{S}}
\DeclareMathOperator\bbZ{\mathbb{Z}}
\DeclareMathOperator\homeos{\textup{Homeo}^+(\mathbb{S}^1)}
\DeclareMathOperator\homeor{\textup{Homeo}^+_{\mathbb{Z}}(\mathbb{R})}
\DeclareMathOperator\overC{\overline{\textup{C}}}
\begin{document}

\title[Parametrized Euler class and semicohomology theory]{Parametrized Euler class and semicohomology theory}

\author[A. Savini]{A. Savini}
\address{Section de Math\'ematiques, University of Geneva, Rue du Li\`evre 2, 1227 Geneva, Switzerland}
\email{Alessio.Savini@unige.ch}

\date{\today.\ \copyright{\ The author was partially supported by the FNS grant no. 200020-192216.}}

\begin{abstract}

We extend Ghys' theory about semiconjugacy to the world of measurable cocycles. More precisely, given a measurable cocycle with values into $\textup{Homeo}^+(\mathbb{S}^1)$, 
we can construct a $\textup{L}^\infty$-parametrized Euler class in bounded cohomology. We show that such a class vanishes if and only if the cocycle can be lifted to $\textup{Homeo}^+_{\mathbb{Z}}(\mathbb{R})$ and it admits an equivariant family of points. 

We define the notion of semicohomologous cocycles and we show that two measurable cocycles are semicohomologous if and only if they induce the same parametrized Euler class. Since for minimal cocycles, semicohomology boils down to cohomology, the parametrized Euler class is constant for minimal cohomologous cocycles. 

We conclude by studying the vanishing of the real parametrized Euler class and we obtain some results of elementarity. 
\end{abstract}
  
\maketitle

\section{Introduction}

One of the most elementary and at the same time intriguing field in dynamics is the study of \emph{circle actions}. A circle action of a group $\Gamma$ is a representation $\rho:\Gamma \rightarrow \textup{Homeo}^+(\bbS^1)$, where $\homeos$ is the group of orientation preserving homeomorphisms of $\bbS^1$. In the particular case when $\Gamma=\bbZ$, the dynamics of circle actions is well-understood thanks to the notion of \emph{rotation number} studied by Poincar\'e \cite{Poi81,Poi82}. For more general groups one can say that either $\rho$ has a finite orbit, or each orbit is dense (the action is \emph{minimal}) or finally there exists an invariant Cantor set. 

As usually happens in Mathematics, we should identify equivalent actions. Since we are dealing with representations, it could be natural to consider the usual conjugacy relation. However, in this particular context, it reveals more useful to deal with a weaker relation, namely \emph{semiconjugacy}. Recall that two representations are semiconjugated if there exists a non-decreasing degree one map $\varphi:\bbS^1 \rightarrow \bbS^1$ which intertwines the actions. 

A very powerful tool in the investigation of circle actions is given by bounded cohomology. The starting point of this approach is the \emph{bounded Euler class} $e^b_{\bbZ} \in \textup{H}^2_b(\homeos;\bbZ)$. Given a representation $\rho:\Gamma \rightarrow \homeos$, we can exploit the functoriality of bounded cohomology to pullback the class $e^b_{\bbZ}$, obtaining a class $\textup{H}^2_b(\rho)(e^b_{\bbZ}) \in \textup{H}^2_b(\Gamma;\bbZ)$. Ghys \cite{ghys:articolo} proved that $\textup{H}^2_b(\rho)(e^b_{\bbZ})$ is a complete invariant of the semiconjugacy class of the representation. Additionally, he also provided a criterion to understand which classes in $\textup{H}^2_b(\Gamma;\bbZ)$ can be realized as pullback of the bounded Euler class. 

More recently Burger \cite{Burger:ext} studied the extendability of the strongly proximal factor associated to a minimal unbounded action of a lattice $\Gamma \leq G$ (notice that one can always modify a minimal action in its semiconjugacy class to obtain a strongly proximal one). Burger noticed that the extension property is deeply related to the pullback of the \emph{real} bounded Euler class $e^b_{\bbR}$ (the same class looked into $\textup{H}^2_b(\Gamma;\bbR)$).  More precisely, denoting by $\rho_{sp}$ the strongly proximal factor associated to $\rho$, we have that $\rho_{sp}$ can be extended to $G$ if and only if the pullback $\textup{H}^2_b(\rho)(e^b_{\bbR})$ lies into the restriction map induced by the inclusion of $\Gamma$ in $G$. Using this, together with results obtained with Monod \cite{BM1,burger2:articolo} about bounded cohomology groups of higher rank lattices, Burger proved several known rigidity results in a unified manner.  Similar rigidity results were previously obtained by Ghys \cite{Ghys1}, Witte and Zimmer \cite{WZ01}, Navas \cite{Nav05} and Bader, Furman and Shaker \cite{BFS06}. 

The purpose of this paper is to extend the application of bounded cohomology in the study of measurable cocycles with values into $\homeos$. The author, together with Moraschini and Sarti \cite{savini3:articolo,moraschini:savini,moraschini:savini:2,sarti:savini,savini:surface,sarti:savini:2}, has recently developed a machinery to define the pullback of a bounded cohomology class along a measurable cocycle. Unfortunately, in this context the invariant that we would obtain following those methods would be too rough to study the dynamics of a measurable cocycle. For this reason we are going to introduce the notion of \emph{parametrized Euler class}. Let $\Gamma$ be a finitely generated group and let $(\Omega,\mu)$ be a standard Borel probability $\Gamma$-space. Starting from measurable cocycle $\sigma:\Gamma \times \Omega \rightarrow \homeos$, we can define a cohomology class with coefficient into $\textup{L}^\infty(\Omega;\bbZ)$ that we are going to call \emph{$\Omega$-parametrized Euler class}. Such a class, denoted by $\textup{H}^2_b(\sigma)(e^b_{\bbZ})$, will encode all the dynamical information associated to $\sigma$. For instance, it will vanish for those cocycles admitting an \emph{equivariant family of points}, that is a measurable equivariant map into the circle. 

Before stating the main result we need to introduce the correct equivalence relation on the space of cocycles. It would be natural to identify measurable cocycles which are cohomologous. However, to follow the same spirit of representations, we need to require a weaker definition. Bader, Furman and Shaker \cite{BFS06} suggested the notion of \emph{semicohomologous cocycles}. Here we want to follow the line of Bucher, Frigerio and Hartnick \cite{BFH} and we introduce the notion of \emph{left semicohomology}. Given two measurable cocycles $\sigma_1, \sigma_2:\Gamma \times \Omega \rightarrow \homeos$, we say that $\sigma_2$ is \emph{left semicohomologous} to $\sigma_1$ if there exists a measurable family $\varphi(\omega):\bbS^1 \rightarrow \bbS^1$ of non-decreasing degree one maps, with $\omega \in \Omega$, such that $\varphi(\gamma \omega)\sigma_1(\gamma,\omega)=\sigma_2(\gamma,\omega)\varphi(\omega)$, for every $\gamma \in \Gamma$ and almost every $\omega \in \Omega$. Equivalently we say that $\sigma_1$ is \emph{right semicohomologous} to $\sigma_2$. Clearly $\sigma_1$ and $\sigma_2$ will be semicohomologous if they are both left and right semicohomologous. 

\begin{intro_thm}\label{teor:main:intro}
Let $\Gamma$ be a finitely generated group and let $(\Omega,\mu)$ be a standard Borel probability $\Gamma$-space. Consider two measurable cocycles $\sigma_1,\sigma_2:\Gamma \times \Omega \rightarrow \textup{Homeo}^+(\bbS^1)$. Then the followings hold:
\begin{enumerate}
	\item Suppose that $\sigma_1$ admits an equivariant family of points. The same holds for $\sigma_2$ and $\textup{H}^2_b(\sigma_1)(e^b_{\bbZ})=\textup{H}^2_b(\sigma_2)(e^b_{\bbZ})=0$. 
	\item If $\textup{H}^2_b(\sigma_1)(e^b_{\bbZ})=\textup{H}^2_b(\sigma_2)(e^b_{\bbZ})$, then $\sigma_1$ and $\sigma_2$ are semicohomologous.
 	\item Assume that $\Omega$ is $\Gamma$-ergodic. Suppose that $\sigma_1$ does not admit any equivariant family of points and that it is left semicohomologous to $\sigma_2$. Then $\textup{H}^2_b(\sigma_1)(e^b_{\bbZ})=\textup{H}^2_b(\sigma_2)(e^b_{\bbZ}) \neq 0$. 
\end{enumerate}
\end{intro_thm}

The above result is a clear extension of \cite[Theorem A]{ghys:articolo} to the context of measurable cocycles. It is actually a refined version in the spirit of Bucher, Frigerio and Hartnick \cite[Theorem 4.3]{BFH}. 

Following the work of Furstenberg \cite{furst:articolo}, one can give a notion of \emph{minimality} also for measurable cocycles (see Definition \ref{def:minimal:cocycle}). Since for minimal cocycles semicohomology boils down to cohomology, we have that the parametrized class is the same for minimal cohomologous cocycles. 

Another crucial step in our investigation is the study of the \emph{real} parametrized Euler class. The latter will be simply the parametrized Euler class seen with real coefficients, that is $\textup{H}^2_b(\sigma)(e^b_{\bbR}) \in \textup{H}^2_b(\Gamma;\textup{L}^\infty(\Omega,\bbR))$. We are going to show the following 

\begin{intro_prop}\label{prop:vanishing:real}
Let $\Gamma$ be a finitely generated group and let $(\Omega,\mu)$ be a standard Borel probability $\Gamma$-space. Consider a measurable cocycle $\sigma:\Gamma \times \Omega \rightarrow \homeos$. It holds that $\textup{H}^2_b(\sigma)(e^b_{\bbR})=0$ if and only if $\sigma$ is semicohomologous to a cocycle taking values into the rotations subgroup $\textup{Rot}$. 
\end{intro_prop}

The above result, together with the work by Burger and Monod \cite{BM1,burger2:articolo}, allows us to obtain two results of elementarity about measurable cocycles of higher rank lattices. To correctly state those results, recall that a lattice $\Gamma \leq \prod_{i=1}^k G_i$ in a product of locally compact groups is \emph{irreducible} if its projection in each $G_i$ is dense. Similarly, a standard Borel probability $G$-space is \emph{irreducible} if the product $G_i':=\prod_{j \neq i} G_j$ acts ergodically on $\Omega$. 

The following theorem should be compared to \cite[Theorem 22]{burger2:articolo} and \cite[Theorem E]{BFS06} 

\begin{intro_thm}\label{teor:lattice:product:elementary}
Let $G=\prod_{i=1}^k G_i$, where $G_i$ is a locally compact second countable group and $k \geq 2$. Let $\Gamma \leq G$ be an irreducible lattice and let $(\Omega,\mu)$ be an irreducible standard Borel probability $G$-space. Assume that $\textup{H}^2_{cb}(G_i;\bbR)=0$ for every $i=1,\cdots,k$. Then every measurable cocycle $\sigma:\Gamma \times \Omega \rightarrow \homeos$ is semicohomologous to a cocycle with values into $\textup{Rot}$. 
\end{intro_thm}

We have also a similar result in the case of lattices in higher rank simple Lie groups (compare it to \cite[Theorem 1]{Ghys1}, \cite[Theorem 5.4]{WZ01} and \cite[Theorem B]{Nav06}).

\begin{intro_thm}\label{teor:higher:rank:elementary}
Let $\kappa$ be a local field and let $\mathbf{G}$ be a $\kappa$-connected simple algebraic group defined over $\kappa$ with $\textup{rank}_\kappa(\mathbf{G}) \geq 2$. 
Let $G=\mathbf{G}(\kappa)$ be the group of $\kappa$-points of $\mathbf{G}$ and consider a lattice $\Gamma \leq G$. Let $(\Omega,\mu)$ be an ergodic standard Borel probability $\Gamma$-space. Suppose that $\textup{H}^2(\Gamma;\bbR)=0$. Then every measurable cocycle $\sigma:\Gamma \times \Omega \rightarrow \homeos$ is semicohomologous to a cocycle with values into $\textup{Rot}$. 
\end{intro_thm}

The proof of both Theorem \ref{teor:lattice:product:elementary} and Theorem \ref{teor:higher:rank:elementary} boils down to show the vanishing of the parametrized Euler class $\textup{H}^2_b(\sigma)(e^b_{\bbR})$, thanks to Proposition \ref{prop:vanishing:real}. 

\subsection*{Plan of the paper}

Section \ref{sec:preliminary} is devoted to preliminary definitions. We recall the notion of both cohomology and bounded cohomology of discrete groups, then we move to some aspects of abelian extensions and we conclude with the definition of the bounded Euler class. 

In Section \ref{sec:parametrized:euler} we introduce the notion of parametrized Euler class and we show that it actually extends the definition given for representations. In Section \ref{sec:extension} we focus our attention on the vanishing of the parametrized Euler class. We show that if the parametrized Euler class vanishes, the measurable cocycle is semicohomologous to the trivial action and equivalently it admits an equivariant family of points. The first statement of Theorem \ref{teor:main:intro} is proved. We conclude the proof of Theorem \ref{teor:main:intro} in Section \ref{sec:ghys:theorem}. We split it into Theorem \ref{teor:ghys:pullback:semicohomology} and Proposition \ref{prop:ghys:nonvanishing}.

In Section \ref{sec:minimal:cocycles} we give the notion of minimal cocycle and we show that for that class of cocycles left semicohomology and cohomology are equivalent. 

We conclude by studying the vanishing the real parametrized Euler class. In the last section are proved Proposition \ref{prop:vanishing:real} and both Theorem \ref{teor:lattice:product:elementary} and Theorem \ref{teor:higher:rank:elementary}.

\section{Preliminary definitions}\label{sec:preliminary}

\subsection{Bounded cohomology of discrete groups}\label{sec:bounded:cohomology}

In this section we are going to recall briefly the definition of cohomology and bounded cohomology for discrete groups. In particular the latter can be expressed by the use of both the homogeneous complex and the inhomogeneous one. We are going to see how they are related. We refer the reader to Frigerio's book \cite{miolibro} for a more detailed exposition about this topic.

Let $\Gamma$ be topological group. Since in this paper we are mainly interested in the (bounded) cohomology of discrete groups, we are going to endow $G$ with the discrete topology, independently of its original topology. For instance, this will be the case of the group $\textup{Homeo}^+(\mathbb{S}^1)$  of orientation preserving homeomorphisms of the circle. 

Let $A$ be a normed $\Gamma$-module and suppose that $\Gamma$ acts on $A$ by preserving the norm. We can define the space of $A$-valued functions on $\Gamma$ as
$$
\textup{C}^\bullet(\Gamma;A):=\{ f: \Gamma^{\bullet+1} \rightarrow A \} \ ,
$$
endowed with natural $\Gamma$-action given by
\begin{equation}\label{eq:action:cochain}
(\gamma f)(\gamma_0,\cdots,\gamma_\bullet):=\gamma \cdot (f(\gamma^{-1} \gamma_0,\cdots,\gamma^{-1}\gamma_\bullet)) \ ,
\end{equation}
for every $f \in \textup{C}^\bullet(\Gamma;A)$ and every $\gamma,\gamma_0,\cdots,\gamma_\bullet \in \Gamma$. The notation $\gamma \cdot$ refers to the $\Gamma$-action on $A$. We are going to say that an $A$-valued function is $\Gamma$-\emph{invariant} if it holds
$$
\gamma f=f \ ,
$$
for every $\gamma \in \Gamma$. We denote by $\textup{C}^\bullet(\Gamma;A)^\Gamma$ the submodule of $\Gamma$-invariant functions. Defining the \emph{standard homeogeneous coboundary operator} $\delta^\bullet$ as
$$
\delta^\bullet:\textup{C}^\bullet(\Gamma;A) \rightarrow \textup{C}^{\bullet+1}(\Gamma;A) \ ,
$$
$$
(\delta^\bullet f)(\gamma_0,\cdots,\gamma_{\bullet+1}):=\sum_{i=0}^{\bullet+1} (-1)^i f(\gamma_0,\cdots,\gamma_{i-1},\gamma_{i+1},\cdots,\gamma_{\bullet+1}) \ ,
$$
for every $\gamma_0,\cdots,\gamma_{\bullet+1} \in \Gamma$, we obtain a cochain complex $(\textup{C}^\bullet(\Gamma;A),\delta^\bullet)$. It is worth noticing that the coboundary operator respects $\Gamma$-invariance and hence it restricts to the collection of submodules of $\Gamma$-invariant functions. 

\begin{deft}\label{def:ordinary:cohomology}
Let $\Gamma$ be a discrete group and let $A$ be a normed $\Gamma$-module. The \emph{cohomology} of the group $\Gamma$ with coefficients in $A$ is the cohomology of the complex $(\textup{C}^\bullet(\Gamma;A)^\Gamma,\delta^\bullet)$ and it is denoted by $\textup{H}^\bullet(\Gamma;A)$. 
\end{deft}

The additional datum of a norm on $A$ allows us to define the notion of $A$-valued bounded functions. Indeed, given a function $f \in \textup{C}^\bullet(\Gamma;A)$, there exists a natural norm given by 
$$
\lVert f \rVert_\infty := \sup_{\gamma_0,\cdots,\gamma_\bullet \in \Gamma} |f(\gamma_0,\cdots,\gamma_\bullet)| \ ,
$$
and we are going to say that $f$ is \emph{bounded} if its norm is finite. We denote by $\textup{C}^\bullet_b(\Gamma;A) \subset \textup{C}^\bullet(\Gamma;A)$ the submodule of bounded functions. Since the coboundary operator $\delta^\bullet$ is defined by a finite sum, it naturally preserves boundedness and it can be restricted to the submodules of bounded functions.

\begin{deft}\label{def:bounded:cohomology}
	Let $\Gamma$ be a discrete group and let $A$ be a normed $\Gamma$-module on which $\Gamma$ acts isometrically. The \emph{bounded cohomology} of the group $\Gamma$ with coefficients in $A$ is the cohomology of the complex $(\textup{C}^\bullet_b(\Gamma;A)^\Gamma,\delta^\bullet)$ and it is denoted by $\textup{H}^\bullet_b(\Gamma;A)$. 
\end{deft}

It is worth mentioning that the natural inclusion of bounded functions, namely
$$
\iota^\bullet_\Gamma: \textup{C}^\bullet_b(\Gamma;A) \rightarrow \textup{C}^\bullet(\Gamma;A) \ ,
$$
is a cochain map and hence induces a map at the level of cohomology groups
$$
\textup{comp}^\bullet_\Gamma:\textup{H}^\bullet_b(\Gamma;A) \rightarrow \textup{H}^\bullet(\Gamma;A) \ ,
$$
called \emph{comparison map}. 

The approach we gave to introduce (bounded) cohomology is quite standard. Nevertheless one may want to get rid of the $\Gamma$-invariance property. In order to do this, one needs to introduce the \emph{inhomogeneous complex} of $A$-valued (bounded) functions. 

More precisely we define 
$$
\overC^\bullet_{(b)}(\Gamma;A):=\textup{C}^{\bullet-1}_{(b)}(\Gamma;A) \ ,
$$
which is clearly isomorphic to the module of $\Gamma$-invariant $A$-valued (bounded) functions. Following \cite[Proposition 7.4.12]{monod:libro}, the two desired isomorphisms are given below
$$
V^\bullet: \overC^\bullet_{(b)}(\Gamma;A) \rightarrow \textup{C}^\bullet_{(b)}(\Gamma;A)^\Gamma \ , \ \ (V^\bullet f)(\gamma_0,\cdots,\gamma_\bullet):=\gamma_0 \cdot(f(\gamma_0^{-1}\gamma_1,\cdots,\gamma_{\bullet-1}^{-1}\gamma_\bullet)) \ ,
$$
$$
W^\bullet:\textup{C}^\bullet_{(b)}(\Gamma;A)^\Gamma \rightarrow \overC^\bullet_{(b)}(\Gamma;A) \ , \ \ (W^\bullet f)(\gamma_1,\cdots,\gamma_{\bullet}):=f(e_\Gamma,\gamma_1,\gamma_1 \gamma_2, \cdots, \gamma_1 \cdots \gamma_\bullet) \ ,
$$
where $e_\Gamma \in \Gamma$ is the neutral element.

Notice that we can rewrite the coboundary operator in terms of the inhomogeneous modules. More precisely, by suitably applying the maps $V^\bullet,W^\bullet$ defined above, we get
$$
\overline{\delta}^\bullet:\overC^\bullet_{(b)}(\Gamma;A) \rightarrow \overC^{\bullet+1}_{(b)}(\Gamma;A) \ ,
$$
\begin{small}
$$
(\overline{\delta}^\bullet f)(\gamma_0,\cdots,\gamma_{\bullet}):=\gamma_0 \cdot (f(\gamma_1,\cdots,\gamma_\bullet))+\sum_{i=1}^\bullet (-1)^if(\gamma_0,\cdots,\gamma_i \gamma_{i+1},\cdots,\gamma_{\bullet})+(-1)^nf(\gamma_0,\cdots,\gamma_{\bullet-1}) \ .
$$
\end{small}

In this way we obtain a cochain complex $(\overC^\bullet_{(b)}(\Gamma;A),\overline{\delta}^\bullet)$ which computes the (bounded) cohomology $\textup{H}^\bullet_{(b)}(\Gamma;A)$. In what follows we will mainly use the inhomogeneous complex for computation, in particular for the definition of the bounded Euler class. 

\subsection{Abelian extensions of a discrete group}\label{sec:extension}

In this section we will briefly recall the relation that exists between the abelian extension of a discrete group $\Gamma$ via a $\Gamma$-module $A$ and the associated cohomology group $\textup{H}^2(\Gamma;A)$. We are going to see that any cocycle in $\overC^2(\Gamma;A)$ it is associated to an extension and that if two cocycles differ by a coboundary then the associated extensions are isomorphic. We refer the reder to \cite[Chapter IV, Section 3]{brown} for a broad discussion about this correspondence. 

Let $\Gamma$ be a discrete group and let $A$ be a $\Gamma$-module. We are not requiring that the module $A$ is normed in this section, even if in this paper we only deal with normed modules. We say that the group $E$ is an \emph{extension} of $\Gamma$ by $A$ if there exists a short exact sequence 
$$
\xymatrix{
0 \ar[r] & A \ar[r]^i & E \ar[r]^p & \Gamma \ar[r] & 0 \ ,
}
$$
where both $i$ and $p$ are morphisms of groups. Suppose we have a set-theoretic section to $p$, that is a map $s:\Gamma \rightarrow E$, such that $p \circ s=\textup{id}_\Gamma$. We are going to say that $s$ is \emph{normalized} if $s(e_\Gamma)=e_E$, where $e_\Gamma$ and $e_E$ are the neutral elements of the corresponding groups. 

Given a normalized section we can construct an element $\varepsilon \in \overC^2(\Gamma;A)$ as follows. Take $\gamma,\lambda \in \Gamma$ and consider the elements $s(\gamma)s(\lambda)$ and $s(\gamma\lambda)$. Since $p$ is a morphism of groups, both the elements are projected to the product $\gamma\lambda$. This means that there exists an element $\varepsilon(\gamma,\lambda) \in A$ such that 
$$
s(\gamma\lambda)i(\varepsilon(\gamma,\lambda))=s(\gamma)s(\lambda) \ .
$$
Notice that the normalization assumption on the section $s$ implies that 
\begin{equation}\label{eq:normalization}
\varepsilon(\gamma,e_\Gamma)=\varepsilon(e_\Gamma,\gamma)=1_A \ .
\end{equation}

The function $\varepsilon$ determines completely the algebraic structure of the extension $E$. Indeed from a set-theoretic point of view $E$ can be identified with the product $ \Gamma \times A$. Additionally the group law on such a product is defined exactly in terms of the function $\varepsilon$. More precisely, given $\gamma,\lambda \in \Gamma$ and $a,b \in A$, we have that
\begin{equation}\label{eq:group:law:cocycle}
(s(\gamma)i(a))\cdot(s(\lambda)i(b))=s(\gamma\lambda)i(\varepsilon(\gamma,\lambda)+(\lambda^{-1} \cdot a)+b) \ ,
\end{equation}
where $\lambda^{-1} \cdot a$ denotes the action of $\Gamma$ on $A$. The associativity property for the group law given by Equation \eqref{eq:group:law:cocycle} implies that $\varepsilon$ is cocycle, that is it satisfies $\overline{\delta}^2 \varepsilon=0$. Thus it defines a class in the group $\textup{H}^2(\Gamma;A)$. 

Additionally one can check that if we choose another normalized section $s':\Gamma \rightarrow E$ we will obtain a cocycle $\varepsilon'$ which is actually cohomologous to the one induced by $s$. In this way we obtain a bijective correspondence between the isomorphism classes of extension of $\Gamma$ by $A$ and the group $\textup{H}^2(\Gamma;A)$. Such a correspondence for us will be crucial when we are going to study extensions of measurable cocycles in the proof of our main Theorem. 

\subsection{Bounded Euler class}\label{sec:bounded:euler:class}

Exploiting what we learned in Section \ref{sec:extension}, we are going to recall the definition of bounded Euler class. 

Fix an orientation on the circle $\bbS^1$. There exists a unique orientation on $\bbR$ such that the covering map $\pi:\bbR \rightarrow \bbS^1 \cong \bbR/\bbZ$ is orientation preserving. 

We denote the group of orientation preserving homeomorphisms by $H:=\textup{Homeo}^+(\bbS^1)$. With the usual compact-open topology, we can consider its universal cover of $\widetilde{H}:=\textup{Homeo}^+_{\bbZ}(\bbR)$, which is defined as
$$
\textup{Homeo}^+_{\bbZ}(\bbR) : = \{ f: \bbR \rightarrow \bbR \ | \ \textup{$f$ is orientation preserving and $f(x+1)=f(x)+1$} \} \ . 
$$
In particular the translation $\tau(x):=x+1$ is an element of $\widetilde{H}$ which is central by definition. More precisely the group generated by $\tau$ is exactly the center of $\widetilde{H}$ and we have a short exact sequence of groups
$$
\xymatrix{
0 \ar[r] & \bbZ \ar[r]^i & \widetilde{H} \ar[r]^p & H \ar[r]  \ar@/_1pc/[l]_{\overline{s}} & 0 \ .
}
$$

Consider now a set-theoretic normalized section $\overline{s}:H \rightarrow \widetilde{H}$ defined by $\overline{s}(f)(0) \in [0,1)$. Notice that the conjugation action of $\widetilde{H}$ on $i(\bbZ)$ is trivial, since the latter is central, and this allows to see the integers as a trivial $\Gamma$-module. By endowing all the groups with the discrete topology, we find ourselves in the situation described by Section \ref{sec:extension}. Thus, for every $f,g \in H$, there exists an integer $\overline{\varepsilon}(f,g)$ such that
\begin{equation}\label{eq:normalized:euler:cocycle}
\overline{s}(f,g)i(\overline{\varepsilon}(f,g))=\overline{s}(f)\overline{s}(g) \ ,
\end{equation}
and the function $\overline{\varepsilon}:H \times H \rightarrow \bbZ$ satisfies $\overline{\delta}^2 \overline{\varepsilon}=0$. Moreover, as noticed by Ghys \cite{ghys:articolo} the function $\overline{\varepsilon}$ is actually bounded and hence $\overline{\varepsilon} \in \overC^2_b(H;\bbZ)$. 

\begin{deft}\label{def:euler:class:bounded:euler:class}
The cohomology class induced by $\overline{\varepsilon}$ is called \emph{Euler class} and we are going to denote it by $e_{\bbZ} \in \textup{H}^2(H;\bbZ)$. We are going to call its bounded analogue \emph{bounded Euler class} and we are going to denote it by $e^b_{\bbZ} \in \textup{H}^2_b(H;\bbZ)$. 
\end{deft}

In Section \ref{sec:real:euler:class} we are going to study also the notion of bounded Euler class with \emph{real} coefficients. Such a class is obtained by considering the function $\overline{\varepsilon}$ as a real-valued function instead of an integer-valued one. We are going to denote such a class by $e^b_{\bbR} \in \textup{H}^2_b(H;\bbR)$.

\section{The parametrized Euler class}\label{sec:parametrized:euler}

In this section we are going to introduce the main character of the paper, that is the parametrized Euler class. This will be a natural generalization of the standard Euler class with integral coefficients coming from representation theory. 

Let $\Gamma$ be a finitely generated group and let $(\Omega,\mu)$ be a standard Borel probability space. We are going to suppose that the $\Gamma$-action on $\Omega$ is essentially free and measure preserving. We are going to call $(\Omega,\mu)$ a \emph{standard Borel probability $\Gamma$-space}. If $(\Theta,\nu)$ is another measure space, we are going to denote by $\upL^0(\Omega,\Theta)$ the space of \emph{(equivalence classes of) measurable functions}, where two functions are identified if they coincide up to a measure zero set. We endow $\upL^0(\Omega,\Theta)$ with the topology of the \emph{convergence in measure}. 

\begin{deft}\label{def:measurable:cocycle}
Let $\Gamma$ be a finitely generated group and let $(\Omega,\mu)$ a standard Borel probability $\Gamma$-space. Given a locally compact group $H$ endowed with its Haar measurable structure, we say that a measurable map $\sigma:\Gamma \times \Omega \rightarrow H$ is a \emph{measurable cocycle} if it satisfies 
\begin{equation}\label{eq:measurable:cocycle}
\sigma(\gamma_1 \gamma_2, \omega)=\sigma(\gamma_1,\gamma_2\omega)\sigma(\gamma_2,\omega) \ ,
\end{equation}
for every $\gamma_1,\gamma_2 \in \Gamma$ and almost every $\omega \in \Omega$. Here $\gamma_2 \omega$ denotes the action of the group $\Gamma$ on $\Omega$. 
\end{deft}

It is worth noticing that Equation \eqref{eq:measurable:cocycle} corresponds to the cocycle condition in the Einlenberg-MacLane cohomology of groups (see Feldman and Moore \cite{feldman:moore}). Indeed, one can observe that $\sigma \in \textup{L}^0(\Gamma,\textup{L}^0(\Omega,H))$ and, by considering the action of $\Gamma$ on $\textup{L}^0(\Omega,H)$ given by $(\gamma f)(\omega):=f(\gamma^{-1}\omega)$, one can verify that Equation \eqref{eq:measurable:cocycle} boils down to the usual cocycle condition. 

We are going to denote by $H=\homeos$ and by $\widetilde{H}=\homeor$. From Section \ref{sec:bounded:euler:class}, we know that there exists a short exact sequence
\begin{equation}\label{eq:short:sequence}
\xymatrix{
0 \ar[r] & \bbZ \ar[r]_i & \widetilde{H} \ar[r]_p & H \ar[r] \ar@/_1pc/[l]_{\overline{s}} & 0 \ , 
}
\end{equation}
where the normalized section $\overline{s}$ is defined by setting  $\overline{s}(f)(0) \in [0,1)$. Notice that all the map involved in the previous sequence are acturally measurable with respect to the measurable Borel structure on each group. By applying the functor $\textup{L}^0(\Omega, \ \cdot \ )$ to the sequence given in Equation \eqref{eq:short:sequence}, we obtain another short exact sequence 
\begin{equation}\label{equation:short:sequence}
\xymatrix{
0 \ar[r] & \textup{L}^0(\Omega,\bbZ) \ar[r]_{i_\Omega} & \textup{L}^0(\Omega,\widetilde{H}) \ar[r]_{p_\Omega} & \textup{L}^0(\Omega,H) \ar[r] \ar@/_1pc/[l]_{\overline{s}_\Omega} & 0 \ , 
}
\end{equation}
where all the maps with the subscript $\Omega$ are obtained by composition with the maps appearing in Equation \eqref{eq:short:sequence}. For instance $i_\Omega(f)(\omega)=i(f(\omega))$ and the same for the other maps. 

In Section \ref{sec:bounded:euler:class} we saw that there exists a function 
$$
\overline{\varepsilon}: H^2 \rightarrow \bbZ \ , \ \ \ \overline{\varepsilon}(h_1,h_2):=\overline{s}(h_1h_2)^{-1}\overline{s}(h_1)\overline{s}(h_2) \ ,
$$
which satisfies $\overline{\delta}^2 \overline{\varepsilon}=0$. This leads to the definition of the bounded Euler class $e^b_{\bbZ} \in \textup{H}^2_b(H;\bbZ)$. Suppose to have a measurable cocycle $\sigma:\Gamma \times \Omega \rightarrow H$. We want to show how we can produce a parametrized Euler class. The latter will be nothing else than the (non integrated) pullback of $e^b_{\bbZ}$ along the measurable cocycle $\sigma$. More precisely we can define 
$$
\overline{\textup{C}}^2_b(\sigma)(\overline{\varepsilon}): \Gamma^2 \rightarrow \textup{L}^\infty(\Omega,\bbZ) \ , 
$$
\begin{equation}\label{eq:euler:pullback}
(\overline{\textup{C}}_b^2(\sigma)(\overline{\varepsilon})(\gamma,\lambda))(\omega):=\overline{\varepsilon}(\sigma(\lambda^{-1},\omega)^{-1},\sigma(\lambda^{-1},\gamma^{-1}\omega)^{-1}) \ ,
\end{equation}
for every $\gamma,\lambda \in \Gamma$ and for almost every $\omega \in \Omega$. 

\begin{lem}\label{lem:cochain:map}
Let $\Gamma$ be a finitely generated group and let $(\Omega,\mu)$ be a standard Borel probability $\Gamma$-space. Given a measurable cocycle $\sigma:\Gamma \times \Omega \rightarrow H$, the map 
$$
\overline{\textup{C}}^2_b(\sigma):\overline{\textup{C}}^2_b(H;\bbZ) \rightarrow \overline{\textup{C}}^2_b(\Gamma;\textup{L}^\infty(\Omega,\mathbb{Z})) \ , 
$$
\begin{equation}\label{eq:pullback:cochain}
\psi \mapsto \overline{\textup{C}}^2_b(\sigma)(\psi)(\gamma,\lambda)(\omega):=\psi(\sigma(\gamma^{-1},\omega)^{-1},\sigma(\lambda^{-1},\gamma^{-1}\omega)^{-1}) \ ,
\end{equation}
induces a well-defined map at the level of bounded cohomology groups
$$
\textup{H}^2_b(\sigma):\textup{H}^2_b(H;\bbZ) \rightarrow \textup{H}^2_b(\Gamma;\textup{L}^\infty(\Omega,\bbZ)) \ , \ \ \textup{H}^2_b(\sigma)([\psi]):=[\overline{\textup{C}}^2_b(\sigma)(\psi)] \ . 
$$
\end{lem}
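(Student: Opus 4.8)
The plan is to verify three things: that the formula in Equation \eqref{eq:pullback:cochain} actually lands in $\overline{\textup{C}}^2_b(\Gamma;\textup{L}^\infty(\Omega,\bbZ))$, that $\overline{\textup{C}}^2_b(\sigma)$ is a cochain map (commutes with $\overline{\delta}^\bullet$), and that it therefore descends to cohomology. The first point has two halves. Measurability: since $\sigma$ is a measurable cocycle and $\overline{s}$, $i$, $p$ are measurable maps of the relevant Borel groups, the assignment $\omega \mapsto \psi(\sigma(\gamma^{-1},\omega)^{-1},\sigma(\lambda^{-1},\gamma^{-1}\omega)^{-1})$ is a composition of measurable maps, hence measurable; and it is defined up to the null set on which the cocycle identity \eqref{eq:measurable:cocycle} fails. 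Boundedness: because $\overline\varepsilon$, and more generally every bounded cochain $\psi \in \overline{\textup{C}}^2_b(H;\bbZ)$, is essentially bounded on $H^2$, the pulled-back function is bounded by $\lVert\psi\rVert_\infty$ uniformly in $(\gamma,\lambda)$ and in $\omega$, so its value lies in $\textup{L}^\infty(\Omega,\bbZ)$ and the whole cochain has $\ell^\infty$-norm at most $\lVert\psi\rVert_\infty$. (In fact this shows $\overline{\textup{C}}^2_b(\sigma)$ is norm non-increasing, which is worth recording.)

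Next I would check that $\overline{\textup{C}}^2_b(\sigma)$ intertwines the coboundary operators, i.e. $\overline\delta^2 \circ \overline{\textup{C}}^2_b(\sigma) = \overline{\textup{C}}^3_b(\sigma) \circ \overline\delta^2$, where $\overline{\textup{C}}^\bullet_b(\sigma)$ is defined by the same pullback recipe in all degrees, $\overline{\textup{C}}^n_b(\sigma)(\psi)(\gamma_1,\dots,\gamma_n)(\omega) := \psi(\sigma(\gamma_1^{-1},\omega)^{-1},\sigma(\gamma_2^{-1},\gamma_1^{-1}\omega)^{-1},\dots)$. The conceptual reason this works is that the pullback recipe is exactly the one coming from the natural map of homogeneous bar complexes: passing to the homogeneous picture via $V^\bullet$, the cochain $\psi$ corresponds to a $\Gamma$-equivariant function on $H^{\bullet+1}$, and $\sigma$ induces (fibrewise over $\Omega$) a map of homogeneous complexes sending an $(n+1)$-tuple in $\Gamma^{n+1}$ to its $\sigma$-image in $H^{n+1}$, which commutes with the simplicial face maps on the nose; transporting back via $W^\bullet$ gives the inhomogeneous formula above. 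So the cleanest route is to identify \eqref{eq:pullback:cochain} with the homogeneous pullback and invoke the standard fact that a simplicial map induces a cochain map, using the cocycle identity \eqref{eq:measurable:cocycle} precisely to see that the homogeneous pullback is well-defined and $\Gamma$-equivariant for the action $(\gamma f)(\omega) = f(\gamma^{-1}\omega)$ on $\textup{L}^\infty(\Omega,\bbZ)$. Alternatively one can just expand both sides of $\overline\delta^2 \overline{\textup{C}}^2_b(\sigma)(\psi) = \overline{\textup{C}}^3_b(\sigma)(\overline\delta^2\psi)$ using the inhomogeneous formula for $\overline\delta$ and the cocycle relation to match terms; this is bookkeeping but elementary.

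Once $\overline{\textup{C}}^\bullet_b(\sigma)$ is a cochain map it carries cocycles to cocycles and coboundaries to coboundaries, so $[\psi] \mapsto [\overline{\textup{C}}^2_b(\sigma)(\psi)]$ is well-defined on $\textup{H}^2_b$; linearity is immediate from linearity of the formula in $\psi$. The one genuine subtlety — the step I expect to be the main obstacle — is the careful handling of null sets: the cocycle identity for $\sigma$ holds only almost everywhere and only for each fixed pair $(\gamma_1,\gamma_2)$, so to conclude that $\overline{\textup{C}}^\bullet_b(\sigma)(\psi)$ genuinely represents an element of the complex $\overline{\textup{C}}^\bullet_b(\Gamma;\textup{L}^\infty(\Omega,\bbZ))$ and that the cochain-map identity holds as an equality in $\textup{L}^\infty(\Omega,\bbZ)$, one must note that $\Gamma$ is countable (being finitely generated), so a countable union of null sets — over all finite tuples of group elements occurring in the relevant identities — is still null, and all the required equations hold off a single null set. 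With that observed, everything goes through; I would state this null-set point explicitly and otherwise keep the verification brief, either citing the homogeneous-complex functoriality or leaving the inhomogeneous term-matching to the reader.
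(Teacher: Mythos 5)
Your proposal is correct and follows essentially the same route as the paper: the paper's proof is exactly the direct term-matching you describe, substituting $h_1=\sigma(\gamma_1^{-1},\omega)^{-1}$, $h_2=\sigma(\gamma_2^{-1},\gamma_1^{-1}\omega)^{-1}$, $h_3=\sigma(\gamma_3^{-1},\gamma_2^{-1}\gamma_1^{-1}\omega)^{-1}$ and using the cocycle identity \eqref{eq:measurable:cocycle} to reduce $\overline{\delta}^2\,\overline{\textup{C}}^2_b(\sigma)(\psi)$ to $\overline{\delta}^2\psi(h_1,h_2,h_3)=0$. If anything you are more thorough than the paper, which only verifies that cocycles map to cocycles and leaves implicit the coboundary-to-coboundary check, the boundedness/measurability of the target, and the countable-union-of-null-sets point that you rightly flag.
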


\begin{proof}
Consider a map $\psi \in \overline{\textup{C}}^2_b(H;\bbZ)$. Suppose that it is a cocycle, that means 
\begin{equation}\label{eq:alexander:spanier}
\overline{\delta}^2\psi(h_1,h_2,h_3)=\psi(h_2,h_3)-\psi(h_1h_2,h_3)+\psi(h_1,h_2h_3)-\psi(h_1,h_2)=0 \ ,
\end{equation}
for every $h_1,h_2,h_3 \in H$. We need to show that $\overline{\textup{C}}^2_b(\sigma)(\psi)$ is a cocycle. Consider $\gamma_1,\gamma_2,\gamma_3 \in \Gamma$. For almost every $\omega \in \Omega$ we have that
\begin{align*}
\overline{\delta}^2 \overline{\textup{C}}^2_b(\sigma)(\psi)(\gamma_1,\gamma_2,\gamma_3)(\omega)=&\overC^2_b(\sigma)(\psi)(\gamma_2,\gamma_3)(\gamma_1^{-1}\omega)-\overC_b^2(\sigma)(\psi)(\gamma_1\gamma_2,\gamma_3)(\omega)\\
+&\overC^2_b(\sigma)(\psi)(\gamma_1,\gamma_2\gamma_3)(\omega)-\overC^2_b(\sigma)(\psi)(\gamma_1,\gamma_2)(\omega)=(\bullet) \ ,
\end{align*}
where we used jointly the definition of the coboundary map $\overline{\delta}^2$ and the action of $\Gamma$ on $\textup{L}^\infty(\Omega,\bbZ)$.  If we now plug in the definition of the map $\overC^2_b(\sigma)$ we obtain 
\begin{align*}
(\bullet)=&\psi(\sigma(\gamma_2^{-1},\gamma_1^{-1}\omega)^{-1},\sigma(\gamma_3^{-1},\gamma_2^{-1}\gamma_1^{-1}\omega)^{-1})-\psi(\sigma(\gamma_2^{-1}\gamma_1^{-1},\omega)^{-1},\sigma(\gamma_3^{-1},\gamma_2^{-1}\gamma_1^{-1}\omega)^{-1})\\
+&\psi(\sigma(\gamma_1^{-1},\omega)^{-1},\sigma(\gamma_3^{-1}\gamma_2^{-1},\gamma_1^{-1}\omega)^{-1})-\psi(\sigma(\gamma_1^{-1},\omega)^{-1},\sigma(\gamma_2^{-1},\gamma_1^{-1}\omega)^{-1}) \ .
\end{align*}
Applying Equation \eqref{eq:measurable:cocycle} to the second term and the third one appearing in the above sum and exploiting the fact that $\psi$ satisfies Equation \eqref{eq:alexander:spanier}, we obtain the desired statement. 
\end{proof}

Lemma \ref{lem:cochain:map} enables us to define the notion of parametrized Euler class.

\begin{deft}\label{def:parametrized:euler:class}
Let $\Gamma$ be a finitely generated group and let $(\Omega,\mu)$ be a standard Borel probability $\Gamma$-space. Given a measurable cocycle $\sigma:\Gamma \times \Omega \rightarrow H$, the \emph{$\Omega$-parametrized Euler class} associated to $\sigma$ is the class $\textup{H}^2_b(\sigma)(e^b_{\bbZ}) \in \textup{H}^2_b(\Gamma;\textup{L}^\infty(\Omega,\bbZ))$. 
\end{deft}

\begin{oss}\label{oss:not:bounded}
It is worth noticing that the same definition given in Equation \eqref{eq:pullback:cochain} still holds if the cochains are not bounded. This leads to a well-defined map 
$$
\textup{H}^2(\sigma):\textup{H}^2(H;\bbZ) \rightarrow \textup{H}^2(\Gamma;\textup{L}^\infty(\Omega,\bbZ)) \ ,
$$
which fits into the following commutative diagram
$$
\xymatrix{
\textup{H}^2_b(H;\bbZ) \ar[d]^{\textup{comp}^2_{H}} \ar[rrr]^{\textup{H}^2_b(\sigma)} &&& \textup{H}^2_b(\Gamma;\textup{L}^\infty(\Omega,\bbZ)) \ar[d]^{\textup{comp}^2_\Gamma} \\ 
\textup{H}^2(H;\bbZ) \ar[rrr]^{\textup{H}^2(\sigma)} &&& \textup{H}^2(\Gamma;\textup{L}^\infty(\Omega,\bbZ)) \ ,
}
$$
where $\textup{comp}^2$ is the comparison map introduced in Section \ref{sec:bounded:cohomology}. 

We are going to use $\textup{H}^2(\sigma)(e_{\bbZ})$ to study when it is possible to lift a measurable cocycle to the universal covering $\widetilde{H}$. 
\end{oss}

\begin{oss}
The name of \emph{$\Omega$-parametrized class} suggests that the pullback of $\overline{\varepsilon}$ along the measurable cocycle $\sigma$ is actually a measurable function whose parameter space is precisely $\Omega$. This should remind to the reader the notion of parametrized fundamental class, already introduced in the theory of integral foliated simplicial volume (see \cite{LP}). 
\end{oss}

\begin{oss}
At first sight, one could be surprised by the definition given in Equation \eqref{eq:euler:pullback}. Even if it seems quite unnatural, it is actually inspired by Bader, Furman and Sauer \cite[Theorem 5.6]{sauer:companion} and by the cohomological induction defined by Monod and Shalom \cite{MonShal} for measurable cocycles coming from measure equivalence. 

Additionally, the author, together with Moraschini and Sarti \cite{savini3:articolo,moraschini:savini,moraschini:savini:2,savini:surface,sarti:savini,sarti:savini:2}, has recently studied a machinery to implement the pullback of bounded cohomology classes along measurable cocycles. The definition appearing in those papers is given in terms of the \emph{homogeneous} resolution of continuous bounded cochains. Exploiting the isomorphisms $V^\bullet,W^\bullet$ defined in Section \ref{sec:bounded:cohomology}, we invite the reader to check that the expression given in Equation \eqref{eq:euler:pullback} is actually the same obtained by the authors in the papers mentioned previously (except for the integration). 
\end{oss}

Now we want to show that the definition we gave of parametrized Euler class it is a generalization of the pullback along a representation. We start noticing that given a representation $\rho:\Gamma \rightarrow H$ we can construct naturally a cocycle as follows
$$
\sigma_\rho:\Gamma \times \Omega \rightarrow H \ , \ \ \ \sigma_\rho(\gamma,\omega):=\rho(\gamma) \ ,
$$
for every $\gamma \in \Gamma$ and almost every $\omega \in \Omega$. Here $(\Omega,\mu)$ is any standard Borel probability $\Gamma$-space. The fact that $\sigma_\rho$ is a cocycle is a direct consequence of the homomorphism rule for $\rho$.

\begin{lem}\label{lem:representation:cocycle}
Let $\Gamma$ be a finitely generated group and let $(\Omega,\mu)$ be a standard Borel probability $\Gamma$-space. Given a representation $\rho:\Gamma \rightarrow H$, it holds
$$
\textup{H}^2_b(\sigma_\rho)(e^b_{\bbZ})=\textup{H}^2_b(\rho)(e^b_{\bbZ}) \ ,
$$
where $\sigma_\rho:\Gamma \times \Omega \rightarrow H$ is the measurable cocycle associated to $\rho$ and $\textup{H}^2_b(\rho)$ is the pullback defined by $\rho$. 
\end{lem}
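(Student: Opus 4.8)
The plan is to reduce the identity to a one-line substitution at the level of inhomogeneous cochains, once the two sides have been placed in a common coefficient module.

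First I would make precise the sense in which the equality holds: $\textup{H}^2_b(\rho)(e^b_{\bbZ})$ lies in $\textup{H}^2_b(\Gamma;\bbZ)$, while $\textup{H}^2_b(\sigma_\rho)(e^b_{\bbZ})$ lies in $\textup{H}^2_b(\Gamma;\textup{L}^\infty(\Omega,\bbZ))$, so the comparison is made through the inclusion $c\colon \bbZ \hookrightarrow \textup{L}^\infty(\Omega,\bbZ)$ sending an integer to the corresponding constant function. I would observe that $c$ is an isometric embedding of normed $\Gamma$-modules: $\bbZ$ is a trivial module and its image consists of the functions fixed by the action $(\gamma f)(\omega)=f(\gamma^{-1}\omega)$. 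Hence $c$ induces a cochain map on the inhomogeneous complexes $\overline{\textup{C}}^\bullet_b(\Gamma;\bbZ)\to\overline{\textup{C}}^\bullet_b(\Gamma;\textup{L}^\infty(\Omega,\bbZ))$ and a map $c_\ast$ on bounded cohomology, and the statement to be proved is precisely $\textup{H}^2_b(\sigma_\rho)(e^b_{\bbZ})=c_\ast\bigl(\textup{H}^2_b(\rho)(e^b_{\bbZ})\bigr)$.

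Next I would compute the pullback cochain. Representing $e^b_{\bbZ}$ by the inhomogeneous cocycle $\overline{\varepsilon}$ of Section~\ref{sec:bounded:euler:class}, and using $\sigma_\rho(\gamma,\omega)=\rho(\gamma)$ for every $\gamma$ and almost every $\omega$, so that $\sigma_\rho(\gamma^{-1},\omega)^{-1}=\rho(\gamma^{-1})^{-1}=\rho(\gamma)$ and likewise $\sigma_\rho(\lambda^{-1},\gamma^{-1}\omega)^{-1}=\rho(\lambda)$, the defining formula \eqref{eq:pullback:cochain} gives
$$\overline{\textup{C}}^2_b(\sigma_\rho)(\overline{\varepsilon})(\gamma,\lambda)(\omega)=\overline{\varepsilon}(\rho(\gamma),\rho(\lambda))$$
for all $\gamma,\lambda\in\Gamma$ and almost every $\omega\in\Omega$; the right-hand side does not depend on $\omega$. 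Thus the pullback cochain is the constant function attached to $\overline{\varepsilon}(\rho(\gamma),\rho(\lambda))$, i.e.\ $c$ applied to the classical pullback cochain. Since the pullback along the homomorphism $\rho$ is, in the inhomogeneous complex (via the isomorphisms $V^\bullet,W^\bullet$ of Section~\ref{sec:bounded:cohomology}), exactly precomposition with $\rho$ in each coordinate, namely $(\gamma,\lambda)\mapsto\overline{\varepsilon}(\rho(\gamma),\rho(\lambda))$, this shows $\overline{\textup{C}}^2_b(\sigma_\rho)(\overline{\varepsilon})=c\circ\overline{\textup{C}}^2_b(\rho)(\overline{\varepsilon})$ as cochains.

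Finally I would pass to cohomology: because $c$ is a cochain map, $\textup{H}^2_b(\sigma_\rho)(e^b_{\bbZ})=\bigl[\overline{\textup{C}}^2_b(\sigma_\rho)(\overline{\varepsilon})\bigr]=c_\ast\bigl[\overline{\textup{C}}^2_b(\rho)(\overline{\varepsilon})\bigr]=c_\ast\bigl(\textup{H}^2_b(\rho)(e^b_{\bbZ})\bigr)$, which is the asserted equality. I do not expect any genuine obstacle here: the only points deserving a sentence are the identification of coefficient modules through the constants and the remark that, for the constant cocycle $\sigma_\rho$, the inverse conventions in \eqref{eq:pullback:cochain} collapse to the ordinary precomposition formula for the pullback along a representation. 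The same reasoning applies verbatim to the unbounded class $e_{\bbZ}$ (with the comparison map of Remark~\ref{oss:not:bounded}) and to the real class $e^b_{\bbR}$.
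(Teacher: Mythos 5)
Your proposal is correct and follows essentially the same route as the paper: both reduce to the one-line substitution $\sigma_\rho(\gamma^{-1},\omega)^{-1}=\rho(\gamma)$ in the inhomogeneous pullback formula, showing the resulting cochain is constant in $\omega$ and equals the classical pullback. Your explicit treatment of the coefficient inclusion $\bbZ\hookrightarrow\textup{L}^\infty(\Omega,\bbZ)$ is a welcome precision that the paper leaves implicit, but it does not change the argument.
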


\begin{proof}
We are going to prove that $\overC^2_b(\sigma_\rho)=\overC^2_b(\rho)$. From this the statement will follow. 

Let $\gamma,\lambda \in \Gamma$ and let $\omega \in \Omega$. Given any cochain $\psi \in \overC^2_b(H;\bbZ)$, we have that
\begin{align*}
\overC^2_b(\sigma_\rho)(\psi)(\gamma,\lambda)(\omega)&=\psi(\sigma_\rho(\gamma^{-1},\omega)^{-1},\sigma_\rho(\lambda^{-1},\gamma^{-1}\omega)^{-1})=\\
&=\psi(\rho(\gamma),\rho(\lambda))=\overC^2_b(\rho)(\psi)(\gamma,\lambda) \ .
\end{align*}
The above computation shows that $\overC^2_b(\sigma_\rho)(\psi)$ does not depend on $\omega \in \Omega$ and it coincides with $\overC^2(\rho)(\psi)$. This finishes the proof. 
\end{proof}

Thanks to the previous lemma, we can say that our result will be a coherent extension of known results coming from representation theory. 

\section{Vanishing of the parametrized Euler class}\label{sec:vanishing:euler}

In this section we are going to study the vanishing of the parametrized Euler class. We are going to see that if the parametrized Euler class vanishes, then the cocycle is liftable and there exists an equivariant family of points on the circle. We refer the reader to \cite[Section 4.1]{BFH} for the same results in the case of representations. 

\begin{lem}\label{lem:cocycle:lift}
Let $\Gamma$ be a finitely generated group and let $(\Omega,\mu)$ be standard Borel probability $\Gamma$-space. Given a measurable cocycle $\sigma:\Gamma \times \Omega \rightarrow H$, then $\textup{H}^2(\sigma)(e_{\bbZ})$ vanishes if and only if there exists a measurable cocycle $\widetilde{\sigma}:\Gamma \times \Omega \rightarrow \widetilde{H}$ such that the following diagram commutes
$$
\xymatrix{
0 \ar[r] & \textup{L}^0(\Omega,\bbZ) \ar[r]_{i_\Omega} & \textup{L}^0(\Omega,\widetilde{H}) \ar[r]_{p_\Omega} & \textup{L}^0(\Omega,H) \ar[r] & 0 \\
& & & \Gamma \ar[u]^{\sigma} \ar@{.>}[ul]^{\widetilde{\sigma}} & .
}
$$
\end{lem}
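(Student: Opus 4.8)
The plan is to exploit the correspondence between abelian extensions and degree-two cohomology recalled in Section \ref{sec:extension}, applied not to $\Gamma$ itself but in the framework where $\sigma$ is viewed as a cocycle in the Eilenberg--MacLane sense, i.e.\ as an element $\sigma \in \textup{L}^0(\Gamma, \textup{L}^0(\Omega, H))$. So the first step is to observe that $\sigma$, together with the (normalized, set-theoretic, measurable) section $\overline{s}_\Omega$ from the short exact sequence \eqref{equation:short:sequence}, produces a concrete inhomogeneous $2$-cocycle with values in $\textup{L}^0(\Omega,\bbZ)$, and that after verifying boundedness this cocycle represents precisely $\textup{H}^2(\sigma)(e_{\bbZ})$ — in fact the cochain $\overline{\textup{C}}^2_b(\sigma)(\overline{\varepsilon})$ written in Equation \eqref{eq:euler:pullback} is exactly the cocycle measuring the failure of $\overline{s}_\Omega \circ \sigma$ to be multiplicative. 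This identification is the key bookkeeping step and should be done carefully, tracking the $\gamma^{-1}$'s and the $\Gamma$-action $(\gamma f)(\omega) = f(\gamma^{-1}\omega)$.

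For the ``if'' direction, suppose a lift $\widetilde{\sigma}:\Gamma \times \Omega \rightarrow \widetilde{H}$ exists with $p_\Omega \circ \widetilde{\sigma} = \sigma$. Then $\widetilde{\sigma}$, being itself a cocycle, gives a genuine multiplicative lift of $\sigma$ through $p_\Omega$, so the ``difference cocycle'' $\overline{s}_\Omega(\sigma)\,\widetilde{\sigma}^{-1}$ takes values in $\textup{L}^0(\Omega,\bbZ)$ and is visibly an inhomogeneous coboundary (it is $\overline{\delta}^1$ of the $\textup{L}^0(\Omega,\bbZ)$-valued $1$-cochain $\gamma \mapsto$ [that difference]); hence $\textup{H}^2(\sigma)(e_{\bbZ}) = 0$. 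One must check the $1$-cochain is in $\overline{\textup{C}}^1$ with the right coefficients, but since $\bbZ \hookrightarrow \widetilde{H}$ is central the computation is the same as in the classical extension picture of Section \ref{sec:extension}, just carried out fibrewise over $\Omega$ and then noting measurability in $\gamma$ is automatic because $\Gamma$ is countable.

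For the ``only if'' direction, assume $\overline{\textup{C}}^2(\sigma)(\overline{\varepsilon}) = \overline{\delta}^1 b$ for some $b \in \overline{\textup{C}}^1(\Gamma; \textup{L}^0(\Omega,\bbZ))$ (a priori with $\textup{L}^\infty$ coefficients, but I only need $\textup{L}^0$ here). Define $\widetilde{\sigma}(\gamma,\omega) := i_\Omega(b(\gamma))(\omega)^{-1}\cdot\overline{s}_\Omega(\sigma(\gamma,\cdot))(\omega)$, up to the same index/sign conventions fixed in the first step; because $i_\Omega$ lands in the center and $\overline{\delta}^1 b$ corrects exactly the non-multiplicativity of $\overline{s}_\Omega \circ \sigma$, a direct substitution shows $\widetilde{\sigma}$ satisfies the cocycle identity \eqref{eq:measurable:cocycle} with values in $\textup{L}^0(\Omega,\widetilde{H})$, i.e.\ it is a measurable cocycle $\Gamma \times \Omega \rightarrow \widetilde{H}$, and $p_\Omega \circ \widetilde{\sigma} = \sigma$ since $p_\Omega \circ i_\Omega = 0$. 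Measurability of $\widetilde{\sigma}$ as a map on $\Gamma \times \Omega$ follows from measurability of $b$, of $\overline{s}_\Omega$, and of $\sigma$, together with countability of $\Gamma$.

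The main obstacle I anticipate is not conceptual but notational: pinning down, once and for all, the precise dictionary between (a) the ``twisted by $\sigma$'' inhomogeneous complex with its shifted arguments $\sigma(\gamma^{-1},\omega)^{-1}$ appearing in \eqref{eq:pullback:cochain}, (b) the honest Eilenberg--MacLane extension of $\textup{L}^0(\Omega,H)$-valued (equivalently $\Gamma$-valued-in-$\textup{L}^0(\Omega,H)$) cocycles coming from \eqref{equation:short:sequence}, and (c) the passage between homogeneous and inhomogeneous cochains via $V^\bullet, W^\bullet$. Once that dictionary is fixed the two implications are each a one-line coboundary computation; choosing conventions so that those lines are genuinely clean — in particular so that the formula for $\widetilde{\sigma}$ in terms of $b$ is correct on the nose rather than up to an inversion — is where the real care goes.
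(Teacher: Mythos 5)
Your plan is correct and matches the paper's proof essentially step for step: the paper also rewrites $\overC^2(\sigma)(\overline{\varepsilon})$ as the defect of multiplicativity of $\overline{s}\circ\sigma$, defines the lift by $\widetilde{\sigma}(\gamma^{-1},\omega)^{-1}:=\overline{s}(\sigma(\gamma^{-1},\omega)^{-1})\,i(-u(\gamma)(\omega))$ using centrality of $i(\bbZ)$, and conversely reads off the primitive $u$ from the integer-valued difference between $\widetilde{\sigma}$ and $\overline{s}\circ\sigma$. The notational care you flag (the $\gamma^{-1}$'s and inverses in Equation \eqref{eq:euler:pullback}) is indeed where all the work lies, and the coefficient issue you mention in the ``if'' direction (whether the $1$-cochain lands in $\textup{L}^\infty(\Omega,\bbZ)$ rather than just $\textup{L}^0(\Omega,\bbZ)$) is treated no more carefully in the paper than in your sketch.
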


\begin{proof}
Recall that $\textup{H}^2(\sigma)$ is the map of Remark \ref{oss:not:bounded}. Suppose that $\textup{H}^2(\sigma)(e_{\bbZ})$ vanishes. We are going to write an explicit candidate for the lift of $\sigma$. 

Since $\textup{H}^2(\sigma)(e_{\bbZ})=0$ there must exist a function 
$$
u:\Gamma \rightarrow \textup{L}^\infty(\Omega,\bbZ) \ ,
$$
such that 
$$
\overC^2(\sigma)(\overline{\varepsilon})(\gamma,\lambda)=(\overline{\delta}^1u)(\gamma,\lambda) \ ,
$$
where the equality holds as functions of $\textup{L}^\infty(\Omega,\bbZ)$. This means that for almost every $\omega \in \Omega$ it must hold
\begin{equation}\label{eq:euler:pullback:coboundary}
\overC^2(\sigma)(\overline{\varepsilon})(\gamma,\lambda)(\omega)=(\overline{\delta}^1u)(\gamma,\lambda)(\omega)=u(\lambda)(\gamma^{-1}\omega)-u(\gamma\lambda)(\omega)+u(\gamma)(\omega) \ .
\end{equation}

Exploiting Equation \eqref{eq:normalized:euler:cocycle} we can write explicitly the definition of $\overline{\varepsilon}$ obtaining 
\begin{equation}\label{eq:euler:pullback:section}
\overC^2(\sigma)(\overline{\varepsilon})(\gamma,\lambda)(\omega)=\overline{s}(\sigma(\lambda^{-1}\gamma^{-1},\omega)^{-1})^{-1}\overline{s}(\sigma(\gamma^{-1},\omega)^{-1})\overline{s}(\sigma(\lambda^{-1},\gamma^{-1}\omega)^{-1}) \ ,
\end{equation}
where $\overline{s}:H \rightarrow \widetilde{H}$ is the usual section defined by $\overline{s}(f)(0) \in [0,1)$. Comparing Equations \eqref{eq:euler:pullback:coboundary} and \eqref{eq:euler:pullback:section} we obtain
\begin{align}\label{eq:euler:comparison}
&\overline{s}(\sigma(\lambda^{-1}\gamma^{-1},\omega)^{-1})^{-1}\overline{s}(\sigma(\gamma^{-1},\omega)^{-1})\overline{s}(\sigma(\lambda^{-1},\gamma^{-1}\omega)^{-1})=\\ =&i(u(\lambda)(\gamma^{-1}\omega))i(-u(\gamma \lambda)(\omega))i(u(\gamma)(\omega)) \ , \nonumber
\end{align}
where $i:\bbZ \rightarrow \widetilde{H}$ is the inclusion. Then we can define 
\begin{equation}\label{eq:lift:formula}
\widetilde{\sigma}:\Gamma \times \Omega \rightarrow H \ , \ \ \ \widetilde{\sigma}(\gamma^{-1},\omega)^{-1}:=\overline{s}(\sigma(\gamma^{-1},\omega)^{-1})i(-u(\gamma)(\omega)) \ .
\end{equation}
Being the composition of measurable function, $\widetilde{\sigma}$ is clearly measurable. We need only to show that it satisfies the cocycle equation. Recall that $i(\bbZ)$ is central in $\widetilde{H}$ and hence it commutes with the elements of $\overline{s}(H)$. Hence we can exploit Equation \eqref{eq:euler:comparison} to show that 
\begin{align*}
\widetilde{\sigma}(\lambda^{-1}\gamma^{-1},\omega)^{-1}&=\overline{s}(\sigma(\lambda^{-1}\gamma^{-1},\omega)^{-1})i(-u(\gamma\lambda)(\omega))=\\
&=\overline{s}(\sigma(\gamma^{-1},\omega)^{-1})i(-u(\gamma)(\omega))\overline{s}(\sigma(\lambda^{-1},\gamma^{-1}\omega)^{-1})i(-u(\lambda)(\gamma^{-1}\omega))=\\
&=\widetilde{\sigma}(\gamma^{-1},\omega)^{-1}\widetilde{\sigma}(\lambda^{-1},\gamma^{-1}\omega)^{-1} \ ,
\end{align*}
and this proves that $\widetilde{\sigma}$ is a cocycle.

Suppose now that there exists a measurable cocycle $\widetilde{\sigma}:\Gamma \times \Omega \rightarrow \widetilde{H}$ which is a lift of $\sigma$. Notice that, for every $\gamma \in \Gamma$ and almost every $\omega \in \Omega$, both $\widetilde{\sigma}(\gamma,\omega)$ and $\overline{s}(\sigma(\gamma,\omega))$ are mapped to the same element $\sigma(\gamma,\omega)$ via the projection $p:\widetilde{H} \rightarrow H$. Thus they must differ by an integer.

Inspired by Equation \eqref{eq:lift:formula} we can define a function $u$ as the unique function such that
\begin{equation}\label{eq:lift:coboundary}
u:\Gamma \rightarrow \textup{L}^\infty(\Omega,\bbZ) \ , \ \  \widetilde{\sigma}(\gamma^{-1},\omega)^{-1}=\overline{s}(\sigma(\gamma^{-1},\omega)^{-1})i(-u(\gamma)(\omega))\ .
\end{equation}
We need to show that $\overline{\delta}^1u=\overC^2(\sigma)(\overline{\varepsilon})$. Given $\gamma,\lambda \in \Gamma$ and $\omega \in \Omega$ we have that
\begin{align*}
&\overline{s}(\sigma(\lambda^{-1}\gamma^{-1},\omega)^{-1})i(-u(\gamma \lambda)(\omega))=\widetilde{\sigma}(\lambda^{-1}\gamma^{-1},\omega)^{-1}=\\
=&\widetilde{\sigma}(\gamma^{-1},\omega)^{-1}\widetilde{\sigma}(\lambda^{-1},\gamma^{-1}\omega)^{-1}=\\
=&\overline{s}(\sigma(\gamma^{-1},\omega)^{-1})i(-u(\gamma)(\omega))\overline{s}(\sigma(\lambda^{-1},\gamma^{-1}\omega)^{-1})i(-u(\lambda)(\gamma^{-1}\omega))=\\
=&\overline{s}(\sigma(\lambda^{-1}\gamma^{-1},\omega)^{-1})i(\overC^2(\sigma)(\overline{\varepsilon})(\gamma,\lambda)(\omega))i(-u(\gamma)(\omega))i(-u(\lambda)(\gamma^{-1})(\omega)) \ ,
\end{align*}
where we exploited Equation \eqref{eq:measurable:cocycle} to move from the first line to the second one, we used the definition of $\overline{\varepsilon}$ to pass from the second line to the third one and we concluded using Equation \eqref{eq:euler:pullback:section}. Hence, rearranging the terms of the equation, we get $\overline{\delta}^1u=\overC^2(\sigma)(\overline{\varepsilon})$ and the statement is proved. 
\end{proof}

The next step of our investigation aims to understand what happens when the pullback of the \emph{bounded} Euler class vanishes. In order to do this, we need first to introduce the following 

\begin{deft}\label{def:equivariant:family}
Let $\Gamma$ be a finitely generated group and let $(\Omega,\mu)$ a standard Borel probability $\Gamma$-space. Given a measurable cocycle $\sigma:\Gamma \times \Omega \rightarrow H$ we say that it admits an \emph{equivariant family of points} if there exists a measurable map $r:\Omega \rightarrow \bbS^1$ such that
$$
r(\gamma \omega)=\sigma(\gamma,\omega)r(\omega) \ ,
$$
for every $\gamma \in \Gamma$ and almost every $\omega \in \Omega$. In this case we say that $r$ is a $\sigma$-\emph{equivariant} measurable function. 

Similarly we are going to say that a measurable cocycle $\widetilde{\sigma}:\Gamma \times \Omega \rightarrow \widetilde{H}$ admits an \emph{equivariant family of points} if  there exists a measurable function $\widetilde{r}:\Omega \rightarrow \bbR$ which is $\widetilde{\sigma}$-equivariant. We say that such a family is \emph{bounded} if the essential image of $\widetilde{r}$ is bounded. 
\end{deft}

The notion of equivariant family of points is the right generalization of fixed point associated to a representation. Indeed given a representation $\rho:\Gamma \rightarrow H$ with a fixed points $x_0 \in \bbS^1$, then it is easy to verify that the measurable cocycle $\sigma_\rho$ admits as equivariant family of points the constant function $r(\omega)=x_0$ for almost every $\omega \in \Omega$. 

\begin{lem}\label{lem:equivariant:family}
Let $\Gamma$ be a finitely generated group and let $(\Omega,\mu)$ be a standard Borel probability $\Gamma$-space. Given a measurable cocycle $\sigma:\Gamma \times \Omega \rightarrow H$, suppose that $\textup{H}_b^2(\sigma)(e^b_{\bbZ})=0$. Then there exists a measurable cocycle $\widetilde{\sigma}:\Gamma \times \Omega \rightarrow \widetilde{H}$ which is a lift of $\sigma$ and $\widetilde{\sigma}$ admits a bounded equivariant family of points. 
\end{lem}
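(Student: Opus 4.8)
The plan is to combine Lemma~\ref{lem:cocycle:lift} with an explicit ``supremum of the orbit'' construction. First, since the comparison map is natural (Remark~\ref{oss:not:bounded}) and carries $e^b_{\bbZ}$ to $e_{\bbZ}$, the hypothesis $\textup{H}^2_b(\sigma)(e^b_{\bbZ})=0$ forces $\textup{H}^2(\sigma)(e_{\bbZ})=0$, so Lemma~\ref{lem:cocycle:lift} produces a measurable lift $\widetilde{\sigma}:\Gamma \times \Omega \rightarrow \widetilde{H}$. What the \emph{bounded} vanishing adds is a \emph{bounded} primitive: there is $u:\Gamma \rightarrow \textup{L}^\infty(\Omega,\bbZ)$ with $\overline{\delta}^1 u=\overC^2(\sigma)(\overline{\varepsilon})$ and $M:=\sup_{\gamma}\lVert u(\gamma)\rVert_\infty<\infty$. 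I would take for $\widetilde{\sigma}$ precisely the lift attached to this $u$ in the proof of Lemma~\ref{lem:cocycle:lift}, namely the cocycle with $\widetilde{\sigma}(\gamma^{-1},\omega)^{-1}=\overline{s}(\sigma(\gamma^{-1},\omega)^{-1})\,i(-u(\gamma)(\omega))$.

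Second, I would bound the orbit of $0\in\bbR$. Since $i(n)$ acts on $\bbR$ as the translation $x\mapsto x+n$ and $\overline{s}(h)(x+k)=\overline{s}(h)(x)+k$ for $k\in\bbZ$, for a.e.\ $\omega$ and every $\gamma$ one gets
\[
\widetilde{\sigma}(\gamma^{-1},\omega)^{-1}(0)=\overline{s}(\sigma(\gamma^{-1},\omega)^{-1})(0)-u(\gamma)(\omega)\in[0,1)-u(\gamma)(\omega),
\]
because $\overline{s}$ is the section with $\overline{s}(f)(0)\in[0,1)$; hence $|\widetilde{\sigma}(\gamma^{-1},\omega)^{-1}(0)|\le M+1$. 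Using the cocycle identity in the form $\widetilde{\sigma}(\gamma^{-1},\omega)^{-1}=\widetilde{\sigma}(\gamma,\gamma^{-1}\omega)$, this says that $\beta(\gamma,\omega):=\widetilde{\sigma}(\gamma,\gamma^{-1}\omega)(0)$ satisfies $|\beta(\gamma,\omega)|\le M+1$ for every $\gamma\in\Gamma$ and a.e.\ $\omega$ (a countable intersection of conull sets is conull).

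Third, I would set
\[
\widetilde{r}(\omega):=\sup_{\gamma\in\Gamma}\beta(\gamma,\omega)=\sup_{\gamma\in\Gamma}\widetilde{\sigma}(\gamma,\gamma^{-1}\omega)(0).
\]
This is measurable, being a countable supremum of measurable functions, and its essential image lies in $[-(M+1),M+1]$, so the family is bounded. From the cocycle relation for $\widetilde{\sigma}$ one extracts the identity $\beta(\lambda\gamma,\omega)=\widetilde{\sigma}(\lambda,\lambda^{-1}\omega)\,\beta(\gamma,\lambda^{-1}\omega)$ for all $\lambda,\gamma\in\Gamma$ and a.e.\ $\omega$. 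As $\widetilde{\sigma}(\lambda,\lambda^{-1}\omega)$ is an orientation-preserving homeomorphism of $\bbR$, it is strictly increasing and continuous, hence commutes with suprema of bounded subsets of $\bbR$; together with the fact that $\gamma\mapsto\lambda\gamma$ permutes $\Gamma$, this yields
\[
\widetilde{\sigma}(\lambda,\lambda^{-1}\omega)\,\widetilde{r}(\lambda^{-1}\omega)=\sup_{\gamma}\widetilde{\sigma}(\lambda,\lambda^{-1}\omega)\,\beta(\gamma,\lambda^{-1}\omega)=\sup_{\gamma}\beta(\lambda\gamma,\omega)=\widetilde{r}(\omega).
\]
Replacing $\omega$ by $\lambda\omega$ (the $\Gamma$-action is measure preserving) gives $\widetilde{r}(\lambda\omega)=\widetilde{\sigma}(\lambda,\omega)\widetilde{r}(\omega)$, i.e.\ $\widetilde{r}$ is a bounded $\widetilde{\sigma}$-equivariant family of points in the sense of Definition~\ref{def:equivariant:family}.

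I expect the only delicate points to be bookkeeping ones: propagating the inverses and the $\gamma^{-1}$'s through the cocycle identity so that the supremum is equivariant in the right convention, and justifying that the supremum may be pulled inside the action of $\widetilde{\sigma}(\lambda,\lambda^{-1}\omega)$ --- which is exactly where the monotonicity of elements of $\widetilde{H}$ is used. One could equally run the argument with $\inf$ in place of $\sup$.
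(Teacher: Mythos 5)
Your proposal is correct and follows essentially the same route as the paper: invoke Lemma \ref{lem:cocycle:lift} via the comparison map, use the boundedness of the primitive $u$ to bound the orbit of $0$, and take $\widetilde{r}(\omega)=\sup_{\gamma}\widetilde{\sigma}(\gamma^{-1},\omega)^{-1}(0)$ (your $\beta(\gamma,\omega)=\widetilde{\sigma}(\gamma,\gamma^{-1}\omega)(0)$ is the same quantity), checking equivariance by reindexing the supremum and pulling the increasing homeomorphism through it. If anything, your write-up is slightly more explicit than the paper's about why the supremum commutes with the action and about the a.e.\ bookkeeping.
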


\begin{proof}
Since $\textup{H}^2_b(\sigma)(e^b_{\bbZ})=0$, the same holds in particular for $\textup{H}^2(\sigma)(e_{\bbZ})$. By Lemma \ref{lem:cocycle:lift}, the vanishing of $\textup{H}^2(\sigma)(e_{\bbZ})$ implies the existence of measurable cocycle $\widetilde{\sigma}:\Gamma \times \Omega \rightarrow \widetilde{H}$ which is a lift of $\sigma$. We are left to show that $\widetilde{\sigma}$ admits an equivariant family of points. 

Let $u:\Gamma \rightarrow \textup{L}^\infty(\Omega,\bbZ)$ be the function which satisfies
$$
\overC^2_b(\sigma)(\overline{\varepsilon})=\overline{\delta}^1u \ . 
$$
Since we assumed that the pullback of $e^b_{\bbZ}$ vanishes as a bounded cocycle, we know that $u$ is a bounded function, that is $|u(\gamma)(\omega)|$ is uniformly bounded for every $\gamma \in \Gamma$ and almost every $\omega \in \Omega$. 

From Equation \eqref{eq:lift:formula} we know that 
$$
\widetilde{\sigma}(\gamma^{-1},\omega)^{-1}=\overline{s}(\sigma(\gamma^{-1},\omega)^{-1})i(-u(\gamma)(\omega)) \ .
$$
By definition we know that $\overline{s}(\sigma(\gamma^{-1},\omega)^{-1})(0) \in [0,1)$. The boundedness of $u$ implies that the measurable function 
$$
\widetilde{r}:\Omega \rightarrow \bbR \ , \ \ \ \widetilde{r}(\omega):=\sup_{\gamma \in \Gamma} \widetilde{\sigma}(\gamma^{-1},\omega)^{-1}(0) \ ,
$$
is well-defined for almost every $\omega \in \Omega$. Additionally is bounded, still by the boundedness of $u$. We claim that this is the desired equivariant family of points. Clearly $\widetilde{r}$ is measurable. We need to show that is it $\widetilde{\sigma}$-equivariant. 
\begin{align*}
\widetilde{r}(\lambda^{-1} \omega)&= \sup_{\gamma \in \Gamma} \widetilde{\sigma}(\gamma^{-1},\lambda^{-1}\omega)^{-1}(0) =\\
&= \widetilde{\sigma}(\lambda^{-1},\omega) (\sup_{\gamma \in \Gamma}\widetilde{\sigma}(\gamma^{-1}\lambda^{-1},\omega)^{-1}(0))=\\
&=\widetilde{\sigma}(\lambda^{-1},\omega)\widetilde{r}(\omega) \ ,
\end{align*}
where we used Equation \eqref{eq:measurable:cocycle} to move from the first line to the second one. This concludes the proof. 
\end{proof}

In the case of representation there exists a well-defined notion of semiconjugacy which is intimately related with the pullback of the Euler class. In the case of measurable cocycle there exists a notion of cohomology (which is compatible with the interpretation in terms of the Eilenberg-MacLane cohomology). Here we want to introduce a notion of \emph{semicohomology} which will be the extension of semiconjugacy to the world of measurable cocycles. The notion of semicohomology appeared in the work of Bader, Furman and Shaker \cite{BFS06}, but here we are going to modify it following the line of Bucher, Frigerio and Hartnick \cite{BFH}. 

\begin{deft}\label{def:semicohomology}
Let $\Gamma$ be a finitely generated group and let $(\Omega,\mu)$ be a standard Borel probability $\Gamma$-space. We say that $\sigma_1$ is \emph{left semicohomologous} to $\sigma_2$ if there exists a measurable map $\varphi:\bbS^1 \times \Omega \rightarrow \bbS^1$ such that the slice $\varphi(\omega):\bbS^1 \rightarrow \bbS^1\ , \ \ (\varphi(\omega))(\xi):=\varphi(\xi,\omega)$ is a non-decreasing degree one map for almost every $\omega \in \Omega$ and it holds
$$
\sigma_1(\gamma,\omega)\varphi(\omega)=\varphi(\gamma \omega)\sigma_2(\gamma,\omega) \ ,
$$
for every $\gamma \in \Gamma$ and almost every $\omega \in \Omega$. We will equivalently say that $\sigma_2$ is \emph{right semicohomologous} to $\sigma_1$. 

We say that $\sigma_1$ is \emph{semicohomologous} to $\sigma_2$ if it is both left and right semicohomologous to $\sigma_2$. Finally, we say that $\sigma_1$ and $\sigma_2$ are \emph{cohomologous} if $\varphi(\omega) \in H$ for almost every $\omega \in \Omega$. 
\end{deft}

Recall that a \emph{non-decreasing degree one map} $\varphi:\bbS^1 \rightarrow \bbS^1$  is a weakly order preserving map for every $k$-tuples of points on the circle and for every $k \in \bbN$. 

It should be clear that semicohomology (as well as cohomology) is an equivalence relation (whereas it is not true for left and right semicohomology). Additionally such notion is a clear extension of the notion of semiconjugacy for representations. Indeed given two representations $\rho_1,\rho_2:\Gamma \rightarrow H$ which are left semiconjugated by $\varphi$, then one can consider $\varphi(\omega)=\varphi$ for almost every $\omega$ to obtain a left semicohomology between $\sigma_{\rho_1}$ and $\sigma_{\rho_2}$. 

\begin{prop}\label{prop:semicohomology:trivial}
Let $\Gamma$ be a finitely generated group and let $(\Omega,\mu)$ be a standard Borel probability $\Gamma$-space. Let $\sigma:\Gamma \times \Omega \rightarrow H$ be a measurable cocycle. Then $\sigma$ is right semicohomologous to the trivial cocycle. It is semicohomologous to the trivial cocycle if and only if it admits an equivariant family of points. 
\end{prop}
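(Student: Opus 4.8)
The plan is to treat the two assertions separately, and the whole argument will rest on one soft observation that I would record first: every constant map $c_p\colon\bbS^1\to\bbS^1$, $\xi\mapsto p$, is a non-decreasing degree one map in the sense recalled just before the statement — any $k$-tuple of points is sent to the (weakly order preserving) constant $k$-tuple $(p,\dots,p)$ — and $c_p\circ f=c_p$ for every $f\in H$. This is the feature that makes collapsing to a point legal; it is the cocycle counterpart of the classical fact that a group action on $\bbS^1$ is semiconjugate to the trivial one exactly when it has a global fixed point.

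For the first assertion I would exhibit the witness explicitly: fix any $p\in\bbS^1$ and put $\varphi(\xi,\omega):=p$ for all $(\xi,\omega)$. Then $\varphi$ is measurable and every slice $\varphi(\omega)=c_p$ is a non-decreasing degree one map. Writing $\sigma_0$ for the trivial cocycle, the identity demanded by Definition~\ref{def:semicohomology} for $\sigma$ to be right semicohomologous to $\sigma_0$ is $\sigma_0(\gamma,\omega)\varphi(\omega)=\varphi(\gamma\omega)\sigma(\gamma,\omega)$, that is $c_p=c_p\circ\sigma(\gamma,\omega)$, which holds for all $\gamma$ and all $\omega$. Hence $\sigma$ is right semicohomologous to $\sigma_0$.

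For the equivalence, the previous paragraph lets me reduce the assertion that $\sigma$ is semicohomologous to $\sigma_0$ to the assertion that $\sigma$ is left semicohomologous to $\sigma_0$, i.e.\ to the existence of a measurable $\varphi\colon\bbS^1\times\Omega\to\bbS^1$ with a.e.\ slice a non-decreasing degree one map and with $\varphi(\gamma\omega)=\sigma(\gamma,\omega)\circ\varphi(\omega)$ (as maps of $\bbS^1$) for all $\gamma$ and a.e.\ $\omega$. If $\sigma$ has a $\sigma$-equivariant family of points $r\colon\Omega\to\bbS^1$, I would take $\varphi(\xi,\omega):=r(\omega)$: it is measurable, each slice is $c_{r(\omega)}$, and $\sigma(\gamma,\omega)\circ\varphi(\omega)=c_{\sigma(\gamma,\omega)r(\omega)}=c_{r(\gamma\omega)}=\varphi(\gamma\omega)$ by equivariance, so $\sigma$ is left semicohomologous, hence semicohomologous, to $\sigma_0$. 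Conversely, given such a $\varphi$, I would fix a basepoint $\xi_0\in\bbS^1$ and set $r(\omega):=\varphi(\xi_0,\omega)$; this is measurable, being $\varphi$ composed with the Borel map $\omega\mapsto(\xi_0,\omega)$, and evaluating $\varphi(\gamma\omega)=\sigma(\gamma,\omega)\circ\varphi(\omega)$ at $\xi_0$ gives $r(\gamma\omega)=\sigma(\gamma,\omega)r(\omega)$ for all $\gamma$ and a.e.\ $\omega$, so $r$ is an equivariant family of points.

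The proof is short and I do not anticipate a genuine obstacle; the points that do need attention are keeping straight the left/right conventions of Definition~\ref{def:semicohomology}, reading the semicohomology identity as an equality of maps $\bbS^1\to\bbS^1$ holding for almost every $\omega$ — so that point-evaluation at $\xi_0$ in the converse is legitimate — and the elementary but essential fact that constant maps qualify as non-decreasing degree one maps.
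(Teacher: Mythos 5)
Your proof is correct and follows essentially the same route as the paper: the constant map $\varphi(\xi,\omega)=x_0$ gives the unconditional right semicohomology, and the equivalence is obtained by passing between an equivariant family $r$ and the family of constant slices $\varphi(\omega)=c_{r(\omega)}$ (respectively, by evaluating a left semicohomology at a basepoint). Your explicit remark that constant maps qualify as non-decreasing degree one maps is a point the paper uses only implicitly, but there is no substantive difference.
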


\begin{proof}
To show that $\sigma:\Gamma \times \Omega \rightarrow H$ is right semicohomologous to the trivial cocycle (that is the one induced by the trivial representation) it is sufficient to consider $\varphi:\bbS^1 \times \Omega \rightarrow \bbS^1$ defined by $\varphi(x,\omega)=x_0$ for almost every $x \in \bbS^1, \omega \in \Omega$. Indeed we have
$$
(\varphi(\gamma \omega))(\sigma(\gamma,\omega)(x))=x_0=(\varphi(\omega))(x) \ ,
$$
for every $\gamma \in \Gamma$ and almost every $x \in \bbS^1, \omega \in \Omega$. 

Suppose now that $\sigma$ is semicohomologous to the trivial cocycle. As a consequence there exists a measurable map $\varphi:\bbS^1 \times \Omega \rightarrow \bbS^1$, which is a left semicohomology between $\sigma$ and the trivial cocycle. More precisely, we must have
\begin{equation}\label{eq:left:semicohomology:trivial}
\sigma(\gamma,\omega)\varphi(\omega)=\varphi(\gamma \omega) \ ,
\end{equation}
for every $\gamma \in \Gamma$ and almost every $\omega \in \Omega$. Here $\varphi(\omega):\bbS^1 \rightarrow \bbS^1$ is the $\omega$-slice associated to $\varphi$. 

To get an equivariant family of points for $\sigma$ it is sufficient to define the measurable function 
$$
r:\Omega \rightarrow \bbS^1 \ , \ \ \ r(\omega):=\varphi(\omega)(x_0) \ ,
$$
for some $x_0 \in \bbS^1$. Exploiting Equation \eqref{eq:left:semicohomology:trivial} we obtain immediately that
$$
r(\gamma \omega)=\varphi(\gamma \omega)(x_0)=\sigma(\gamma,\omega)(\varphi(\omega)(x_0))=\sigma(\gamma,\omega)r(\omega) \ ,
$$
that is $r$ is $\sigma$-equivariant and the claim is proved. 

Suppose now that $\sigma$ admits an equivariant family of points. This implies the existence of a measurable map $r:\Omega \rightarrow \bbS^1$ which is $\sigma$-equivariant. 
To obtain a left semicohomology between $\sigma$ and the trivial cocycle it is sufficient to define
$$
\varphi:\bbS^1 \times \Omega \rightarrow \bbS^1 \ , \ \ \ \varphi(x,\omega):=r(\omega) \ . 
$$
Clearly $r(\omega)$ is a non-decreasing degree one map and it holds
$$
\sigma(\gamma,\omega)(\varphi(\omega)(x))=\sigma(\gamma,\omega)r(\omega)=r(\gamma \omega)=\varphi(\gamma \omega)(x) \ ,
$$
for every $\gamma \in \Gamma, x \in \bbS^1$ and almost every $\omega \in \Omega$. This shows that $\sigma$ is left semicohomologous to the trivial cocycle and it concludes the proof. 
\end{proof}

We are finally ready to give a complete characterization of measurable cocycles whose parametrized Euler class vanishes identically. 

\begin{teor}\label{teor:vanishing:euler}
Let $\Gamma$ be a finitely generated group and let $(\Omega,\mu)$ be a standard Borel probability $\Gamma$-space. Consider a measurable cocycle $\sigma:\Gamma \times \Omega \rightarrow H$. Then the followings are equivalent.
\begin{enumerate}
	\item The parametrized Euler class vanishes, that is $\textup{H}^2_b(\sigma)(e^b_{\bbZ})=0$\ .
	\item There exists a measurable cocycle $\widetilde{\sigma}:\Gamma \times \Omega \rightarrow \widetilde{H}$ which is a lift of $\sigma$ and $\widetilde{\sigma}$ admits a bounded equivariant family of points in $\bbR$\ .
	\item There exists a $\sigma$-equivariant family of points in $\bbS^1$.
	\item The cocycle is semicohomologous to the trivial cocycle\ .
\end{enumerate}
\end{teor}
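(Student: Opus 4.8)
The strategy is a cycle of implications resting on the three results already available: Lemma~\ref{lem:equivariant:family}, Lemma~\ref{lem:cocycle:lift} and Proposition~\ref{prop:semicohomology:trivial}. The implication $(1)\Rightarrow(2)$ is exactly Lemma~\ref{lem:equivariant:family}, so nothing is needed there. For $(2)\Rightarrow(3)$ I would push the bounded equivariant family $\widetilde r:\Omega\to\bbR$ of $\widetilde\sigma$ forward along the covering projection $\pi:\bbR\to\bbS^1$: the map $r:=\pi\circ\widetilde r$ is measurable, and since $p_\Omega\circ\widetilde\sigma=\sigma$ means precisely that $\pi$ intertwines the $\widetilde\sigma(\gamma,\omega)$-action on $\bbR$ with the $\sigma(\gamma,\omega)$-action on $\bbS^1$, one reads off $r(\gamma\omega)=\pi(\widetilde\sigma(\gamma,\omega)\widetilde r(\omega))=\sigma(\gamma,\omega)r(\omega)$; boundedness of $\widetilde r$ plays no role here. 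Finally $(3)\Leftrightarrow(4)$ is the statement of Proposition~\ref{prop:semicohomology:trivial}. Thus everything follows once $(3)\Rightarrow(2)$ and $(2)\Rightarrow(1)$ are proved, since these combine with the above to give $(1)\Leftrightarrow(2)\Leftrightarrow(3)\Leftrightarrow(4)$.

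For $(3)\Rightarrow(2)$, start from a $\sigma$-equivariant measurable map $r:\Omega\to\bbS^1$ and let $\widetilde r:\Omega\to[0,1)\subset\bbR$ be its measurable lift coming from the Borel section of $\pi$; in particular $\widetilde r$ is bounded. Define $\widetilde\sigma(\gamma,\omega)\in\widetilde H$ to be the unique lift of $\sigma(\gamma,\omega)$ carrying $\widetilde r(\omega)$ to $\widetilde r(\gamma\omega)$; explicitly $\widetilde\sigma(\gamma,\omega):=\overline{s}(\sigma(\gamma,\omega))\,i(-m(\gamma,\omega))$ with $m(\gamma,\omega):=\overline{s}(\sigma(\gamma,\omega))(\widetilde r(\omega))-\widetilde r(\gamma\omega)$, which lies in $\bbZ$ because $\overline{s}(\sigma(\gamma,\omega))(\widetilde r(\omega))$ is a lift of $\sigma(\gamma,\omega)r(\omega)=r(\gamma\omega)$; being a composition of measurable maps, $\widetilde\sigma$ is measurable. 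Using the cocycle identity for $\sigma$, the equivariance of $\widetilde r$ and the uniqueness clause, one checks that $\widetilde\sigma(\gamma_1\gamma_2,\omega)$ and $\widetilde\sigma(\gamma_1,\gamma_2\omega)\widetilde\sigma(\gamma_2,\omega)$ are both lifts of $\sigma(\gamma_1\gamma_2,\omega)$ carrying $\widetilde r(\omega)$ to $\widetilde r(\gamma_1\gamma_2\omega)$, hence agree; so $\widetilde\sigma$ is a measurable cocycle lifting $\sigma$, with $\widetilde r$ a bounded $\widetilde\sigma$-equivariant family, which is $(2)$.

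For $(2)\Rightarrow(1)$, let $\widetilde\sigma$ lift $\sigma$ with bounded equivariant family $\widetilde r$, say $|\widetilde r|\le M$ almost everywhere, and let $u:\Gamma\to\textup{L}^\infty(\Omega,\bbZ)$ be the cochain attached to $\widetilde\sigma$ in the proof of Lemma~\ref{lem:cocycle:lift}, so $\widetilde\sigma(\gamma^{-1},\omega)^{-1}=\overline{s}(\sigma(\gamma^{-1},\omega)^{-1})\,i(-u(\gamma)(\omega))$ and $\overline{\delta}^1u$ equals the parametrized bounded Euler cocycle $\overC^2_b(\sigma)(\overline{\varepsilon})$. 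The crux is that $u$ is \emph{uniformly} bounded. The cocycle identity (which also forces $\widetilde\sigma$ to be trivial on the identity of $\Gamma$) gives $g:=\widetilde\sigma(\gamma^{-1},\omega)^{-1}=\widetilde\sigma(\gamma,\gamma^{-1}\omega)$, so equivariance of $\widetilde r$ yields $g(\widetilde r(\gamma^{-1}\omega))=\widetilde r(\omega)$ with both arguments of absolute value $\le M$; since $g\in\widetilde H$ is nondecreasing and commutes with integer translations, picking an integer $N\ge M$ gives $|g(0)|\le M+N\le 2M+1$, and as $\overline{s}(\sigma(\gamma^{-1},\omega)^{-1})(0)\in[0,1)$, evaluating the defining relation of $u$ at $0$ produces $|u(\gamma)(\omega)|\le 2M+2$ for every $\gamma\in\Gamma$ and almost every $\omega$. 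Hence $u\in\overC^1_b(\Gamma;\textup{L}^\infty(\Omega,\bbZ))$ and the identity $\overC^2_b(\sigma)(\overline{\varepsilon})=\overline{\delta}^1u$ exhibits the parametrized bounded Euler cocycle as a bounded coboundary, so $\textup{H}^2_b(\sigma)(e^b_{\bbZ})=0$. The main obstacle is precisely this last estimate — converting a \emph{bounded} real equivariant family into a bound on the integer-valued cochain $u$ that is uniform in both $\gamma$ and $\omega$ — together with the routine but careful measurability and cocycle verifications for the lift $\widetilde\sigma$ built in $(3)\Rightarrow(2)$.
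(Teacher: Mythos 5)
Your proof is correct and follows essentially the same route as the paper: the same cycle of implications resting on Lemma~\ref{lem:equivariant:family}, Lemma~\ref{lem:cocycle:lift} and Proposition~\ref{prop:semicohomology:trivial}, with the lift in $(3)\Rightarrow(2)$ characterized as the unique lift of $\sigma(\gamma,\omega)$ matching the marked points $\widetilde r(\omega)\mapsto\widetilde r(\gamma\omega)$, and the boundedness of $u$ in $(2)\Rightarrow(1)$ extracted from the bounded equivariant family. The only cosmetic differences are that you evaluate the defining relation of $u$ at $0$ instead of at $\widetilde r(\gamma^{-1}\omega)$, and build the lift as $\overline{s}(\sigma(\gamma,\omega))$ corrected by an integer translation rather than by lifting the restricted homeomorphism interval by interval.
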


\begin{proof}
The fact that \emph{Ad. 1} implies \emph{Ad. 2} is the content of Lemma \ref{lem:equivariant:family}. We need to show that \emph{Ad. 2} implies \emph{Ad. 1}. Since there exists a lift $\widetilde{\sigma}:\Gamma \times \Omega \rightarrow \widetilde{H}$, by Lemma \ref{lem:cocycle:lift} there exists a function $u:\Gamma \rightarrow \textup{L}^\infty(\Omega,\bbZ)$ such that $\overline{\delta}^1 u=\overC^2(\sigma)(\overline{\varepsilon})$. An explicit expression of the function $u$ is given by
$$
\widetilde{\sigma}(\gamma^{-1},\omega)^{-1}=\overline{s}(\sigma(\gamma^{-1},\omega)^{-1})i(-u(\gamma)(\omega)) \ . 
$$ 
Let $\widetilde{r}:\Omega \rightarrow \bbR$ the bounded equivariant family associated to $\widetilde{\sigma}$. Then 
\begin{align*}
u(\gamma)(\omega)&=\overline{s}(\sigma(\gamma^{-1},\omega)^{-1})(\widetilde{r}(\gamma^{-1}\omega))-\widetilde{\sigma}(\gamma^{-1},\omega)^{-1}(\widetilde{r}(\gamma^{-1}\omega))=\\
&=\overline{s}(\sigma(\gamma^{-1},\omega)^{-1})(\{\widetilde{r}(\gamma^{-1}\omega)\})+\lfloor \widetilde{r}(\gamma^{-1}\omega) \rfloor -\widetilde{r}(\omega) \ ,
\end{align*}
where $\{\widetilde{r}(\gamma^{-1}\omega)\}$ and $\lfloor \widetilde{r}(\gamma^{-1}\omega) \rfloor$ are the fractionary and the integer part of $\widetilde{r}(\gamma^{-1}\omega)$, respectively. Since $\overline{s}(\sigma(\gamma^{-1},\omega)^{-1})(x)$ lies inside the interval $[0,2)$ if $x \in [0,1)$, the boundedness of $\widetilde{r}$ implies the boundedness of $u$, and the claim is proved. Thus we have shown that \emph{Ad. 1} is equivalent to \emph{Ad. 2}. 

Suppose that there exists a measurable cocycle $\widetilde{\sigma}$ which is a lift of $\sigma$ and such that is admits a bounded equivariant family of points in $\bbR$. Let $\widetilde{r}:\Omega \rightarrow \bbR$ be such a family. To obtain an equivariant family of points for $\sigma$ it is sufficient to consider the composition 
$$
r:\Omega \rightarrow \bbS^1 \ , \ \ \ r(\omega)=\pi(\widetilde{r}(\omega)) \ ,
$$
where $\pi:\bbR \rightarrow \bbS^1$ is the natural projection. By the $\widetilde{\sigma}$-equivariance of $\widetilde{r}$ we get 
$$
r(\gamma \omega)=\pi(\widetilde{r}(\gamma \omega))=\pi(\widetilde{\sigma}(\gamma,\omega)\widetilde{r}(\omega))=\sigma(\gamma,\omega)\pi(\widetilde{r}(\omega))=\sigma(\gamma,\omega)r(\omega) \ ,
$$
for every $\gamma \in \Gamma$ and almost every $\omega \in \Omega$. 

Viceversa, suppose that there exists a $\sigma$-equivariant family of points in $\bbS^1$. Let $r:\Omega \rightarrow \bbS^1$ such a family. Since the projection $\pi:\bbR \rightarrow \bbS^1$ admits a measurable section $s:\bbS^1 \rightarrow \bbR$, we can consider the composition 
$$
\widetilde{r}:\Omega \rightarrow \bbR \ , \ \ \ \widetilde{r}(\omega):=s(r(\omega)) \ ,
$$
for almost every $\omega \in \Omega$. Clearly $\widetilde{r}$ is measurable, being the composition of two measurable functions. Additionally if we suppose that $s$ has image contained in $[0,1)$, then $\widetilde{r}$ is essentially bounded. 

Given $\gamma \in \Gamma$, since $r$ is $\sigma$-equivariant, the element $\sigma(\gamma,\omega)$ restricts to a homeomorphism
$$
\sigma(\gamma,\omega): \bbS^1 \setminus \{r(\omega)\} \rightarrow \bbS^1 \setminus \{r(\gamma \omega)\} \ . 
$$
We can lift the above map to obtain a map 
$$
\widetilde{\sigma}(\gamma,\omega):(\widetilde{r}(\omega),\widetilde{r}(\omega)+1) \rightarrow (\widetilde{r}(\gamma \omega),\widetilde{r}(\gamma \omega)+1) \ ,
$$
which is a homeomorphism. By periodicity we can extend such homeomorphism to 
$$
\widetilde{\sigma}(\gamma,\omega): \bigsqcup_{n \in \bbZ} (\widetilde{r}(\omega)+n,\widetilde{r}(\omega)+n+1) \rightarrow \bigsqcup_{n \in \bbZ} (\widetilde{r}(\gamma \omega)+n,\widetilde{r}(\gamma \omega)+n+1) \ ,
$$
Finally we can extend it to the whole real line by setting
$$
\widetilde{\sigma}(\gamma,\omega)(\widetilde{r}(\omega)+n):=\widetilde{r}(\gamma \omega)+n \, 
$$
for every $n \in \bbZ$. In this way we obtain a measurable map $\widetilde{\sigma}:\Gamma \times \Omega \rightarrow \widetilde{H}$, which is a lift of $\sigma$. Additionally $\widetilde{\sigma}$ is a cocycle since both $\widetilde{\sigma}(\gamma \lambda,\omega)\widetilde{r}(\omega)$ and $(\widetilde{\sigma}(\gamma,\lambda\omega)\widetilde{\sigma}(\lambda,\omega))(\widetilde{r}(\omega))$ are equal to $\widetilde{r}(\gamma \lambda \omega)$. Finally $\widetilde{r}$ is a bounded equivariant family for $\widetilde{\sigma}$. This concludes the equivalence of \emph{Ad. 2} and \emph{Ad. 3}.

The equivalence between \emph{Ad. 3} and \emph{Ad. 4} is the content of Proposition \ref{prop:semicohomology:trivial}. 
\end{proof}

Using the previous theorem we can prove \emph{Ad. 1} of Theorem \ref{teor:main:intro}.

\begin{teor}\label{teor:ghys:vanishing}
Let $\Gamma$ be a finitely generated group and let $(\Omega,\mu)$ be a standard Borel probability $\Gamma$-space. Let $\sigma_1,\sigma_2:\Gamma \times \Omega \rightarrow H$ be two measurable cocycles which are semicohomologous. If one of them, say $\sigma_1$, has an equivariant family of points, the same holds for $\sigma_2$ and we have
$$
\textup{H}^2_b(\sigma_1)(e^b_{\bbZ})=\textup{H}^2_b(\sigma_2)(e^b_{\bbZ})=0 \ .
$$
\end{teor}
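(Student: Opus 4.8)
The plan is to reduce the statement entirely to Theorem~\ref{teor:vanishing:euler}, which already packages the substantive work (liftability, boundedness, the correspondence between equivariant families and vanishing of the class); what is left is only to transport the hypothesis from $\sigma_1$ to $\sigma_2$ along the semicohomology relation.

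First I would argue as follows. Since $\sigma_1$ admits an equivariant family of points, condition \emph{Ad. 3} of Theorem~\ref{teor:vanishing:euler} holds for $\sigma_1$; hence so do \emph{Ad. 1}, that is $\textup{H}^2_b(\sigma_1)(e^b_{\bbZ})=0$, and \emph{Ad. 4}, that is $\sigma_1$ is semicohomologous to the trivial cocycle. Now $\sigma_1$ is semicohomologous to $\sigma_2$ by hypothesis, and semicohomology is an equivalence relation; by symmetry and transitivity, $\sigma_2$ is therefore semicohomologous to the trivial cocycle, i.e. \emph{Ad. 4} holds for $\sigma_2$. Applying Theorem~\ref{teor:vanishing:euler} once more (this time from \emph{Ad. 4} to the rest), $\sigma_2$ admits an equivariant family of points and $\textup{H}^2_b(\sigma_2)(e^b_{\bbZ})=0$. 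This is exactly the assertion.

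It is worth recording the concrete construction hidden behind the transitivity step, since it is the one point where a (small) verification is needed. The hypothesis that $\sigma_1$ and $\sigma_2$ are semicohomologous yields, in particular, a measurable family $\varphi(\omega):\bbS^1\rightarrow\bbS^1$ of non-decreasing degree one maps witnessing that $\sigma_2$ is left semicohomologous to $\sigma_1$, i.e. $\sigma_2(\gamma,\omega)\varphi(\omega)=\varphi(\gamma\omega)\sigma_1(\gamma,\omega)$ for every $\gamma\in\Gamma$ and almost every $\omega$. If $r_1:\Omega\rightarrow\bbS^1$ is $\sigma_1$-equivariant, set $r_2(\omega):=\varphi(\omega)(r_1(\omega))$; this is measurable by joint measurability of $\varphi$ together with measurability of $r_1$, and from $r_1(\gamma\omega)=\sigma_1(\gamma,\omega)r_1(\omega)$ and the intertwining identity one gets $r_2(\gamma\omega)=\varphi(\gamma\omega)\sigma_1(\gamma,\omega)r_1(\omega)=\sigma_2(\gamma,\omega)\varphi(\omega)r_1(\omega)=\sigma_2(\gamma,\omega)r_2(\omega)$. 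The only thing to be careful about is that one must use precisely this direction of the relation: the maps $\varphi(\omega)$ need not be invertible, so there is no way to push $r_1$ backwards, and it is exactly the two-sidedness of semicohomology that makes the required form of the identity available. I do not expect any genuine obstacle beyond this bookkeeping, as everything delicate is already absorbed into Theorem~\ref{teor:vanishing:euler}.
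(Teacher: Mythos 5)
Your proof is correct and follows essentially the same route as the paper: reduce everything to Theorem \ref{teor:vanishing:euler} and use that semicohomology is an equivalence relation to transport the hypothesis from $\sigma_1$ to $\sigma_2$. The explicit construction $r_2(\omega):=\varphi(\omega)(r_1(\omega))$ you record is a nice (and correct) concrete witness for the step the paper leaves implicit.
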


\begin{proof}
This is a direct consequence of Theorem \ref{teor:vanishing:euler}. Indeed if $\sigma_1$ has an equivariant family of points, then $\sigma_1$ is semicohomologous to the trivial action. By the fact that semicohomology is an equivalence relation, we get that $\sigma_2$ is semicohomologous to the trivial cocycle as well. Then its admits an equivariant family of points and for both $\sigma_1$ and $\sigma_2$ the $\Omega$-parametrized Euler class must vanish. 
\end{proof}

\section{Ghys' theorem for parametrized Euler classes}\label{sec:ghys:theorem}

In this section we are going to study the relation between the semicohomology class of a measurable cocycle and the parametrized Euler class. We are going to see that the parametrized Euler class is a complete invariant for the semicohomology class. We are going to focus our attention on the case when the parametrized Euler class does not vanish, since we already studied the case when it is identically zero in the previous section. 

We start by showing \emph{Ad. 2} of Theorem \ref{teor:main:intro}. The following is a clear generalization of \cite[Theorem A]{ghys:articolo} to the world of measurable cocycles. 

\begin{teor}\label{teor:ghys:pullback:semicohomology}
Let $\Gamma$ be a finitely generated group and let $(\Omega,\mu)$ be a standard Borel probability $\Gamma$-space. Let $\sigma_1,\sigma_2:\Gamma \times \Omega \rightarrow H$ be two measurable cocycles. Suppose that 
$$
\textup{H}^2_b(\sigma_1)(e^b_{\bbZ})=\textup{H}^2_b(\sigma_2)(e^b_{\bbZ}) \ .
$$
Then $\sigma_1$ and $\sigma_2$ are semicohomologous. 
\end{teor}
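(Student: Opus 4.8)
The plan is to reduce to the setting of Theorem \ref{teor:vanishing:euler} by considering an auxiliary ``difference'' construction, and to use the correspondence between abelian extensions and $\textup{H}^2$ recalled in Section \ref{sec:extension}. First I would pass to the inhomogeneous complex and pick explicit bounded cocycle representatives $\overline{\textup{C}}^2_b(\sigma_1)(\overline{\varepsilon})$ and $\overline{\textup{C}}^2_b(\sigma_2)(\overline{\varepsilon})$ of the two parametrized Euler classes. By hypothesis their difference is a coboundary $\overline{\delta}^1 u$ for some bounded $u:\Gamma \to \textup{L}^\infty(\Omega,\bbZ)$. I would like to interpret this as follows: the cocycle $\sigma_2$ lifts ``relative to $\sigma_1$'', i.e.\ there is a measurable cocycle $\tau:\Gamma\times\Omega\to\widetilde H$ with $p_\Omega\circ\tau=\sigma_2$ whose associated integral $2$-cocycle (via the section $\overline s$) agrees with $\overline{\textup{C}}^2_b(\sigma_1)(\overline{\varepsilon})$ on the nose, after adjusting by $i_\Omega(u)$ exactly as in the proof of Lemma \ref{lem:cocycle:lift}. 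Concretely I would set $\tau(\gamma^{-1},\omega)^{-1}:=\overline s(\sigma_2(\gamma^{-1},\omega)^{-1})\,i(-u(\gamma)(\omega))$ and verify, using Equation \eqref{eq:measurable:cocycle} and the relation $\overline{\textup{C}}^2_b(\sigma_2)(\overline{\varepsilon})-\overline{\textup{C}}^2_b(\sigma_1)(\overline{\varepsilon})=\overline{\delta}^1 u$ together with the centrality of $i(\bbZ)$, that $\tau$ satisfies the cocycle identity with ``defect'' cocycle equal to $\overline{\textup{C}}^2_b(\sigma_1)(\overline{\varepsilon})$.

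Next I would use $\sigma_1$ itself (with its canonical section lift $\widehat\sigma(\gamma^{-1},\omega)^{-1}:=\overline s(\sigma_1(\gamma^{-1},\omega)^{-1})$, which is a ``cocycle up to $\overline{\textup{C}}^2_b(\sigma_1)(\overline{\varepsilon})$'') to build the relevant intertwiner. The point is that $\tau$ and $\widehat\sigma$ take values in $\widetilde H=\homeor$, project respectively to $\sigma_2$ and $\sigma_1$, and differ multiplicatively by an honest measurable cocycle valued in $\bbZ$ (namely the coboundary data encoded by $u$, which is a genuine cocycle because the two ``defects'' coincide). In particular, for almost every $\omega$, the element $\tau(\gamma,\omega)\widehat\sigma(\gamma,\omega)^{-1}\in\widetilde H$ is a well-defined measurable family whose $\Gamma$-dependence is a cocycle; since both lie over a pair of cocycles with equal defect, the natural candidate for the semicohomology map $\varphi$ is a measurable family of monotone degree one maps of $\bbS^1$ obtained by averaging/pushing forward via this $\widetilde H$-valued data. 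Here I would follow Ghys' original argument \cite[Theorem A]{ghys:articolo} and the refinement of Bucher--Frigerio--Hartnick \cite[Theorem 4.3]{BFH}: one constructs, $\omega$ by $\omega$, a non-decreasing degree one $\varphi(\omega)$ by taking for each $\xi\in\bbS^1$ the ``limit point'' of the $\tau$-orbit read against the $\widehat\sigma$-orbit (or, in the bounded language, using the bounded $u$ to realize an equivariant gap), and checks $\sigma_1(\gamma,\omega)\varphi(\omega)=\varphi(\gamma\omega)\sigma_2(\gamma,\omega)$ directly from the two intertwining relations. Measurability of $\omega\mapsto\varphi(\omega)$ follows because all the ingredients ($\overline s$, $\sigma_i$, $u$, the projection $\pi$) are measurable and the limiting construction is a countable sup/inf over $\Gamma$. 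By symmetry (exchanging the roles of $\sigma_1$ and $\sigma_2$, using $-u$ and the inverse relation), one gets the reverse left semicohomology, hence $\sigma_1$ and $\sigma_2$ are semicohomologous.

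The main obstacle I anticipate is the construction and, above all, the \emph{measurability} of the intertwining family $\varphi(\omega)$: Ghys' construction of the semiconjugacy is a limit/sup procedure on the circle that must be carried out fibrewise and then shown to depend measurably on $\omega\in\Omega$, and one must check that the boundedness of $u$ is exactly what guarantees that this limit is non-degenerate (i.e.\ $\varphi(\omega)$ is genuinely a degree one map and not constant or ill-defined), so that the output really is a (left) semicohomology and not something weaker. A secondary technical point is bookkeeping the inverses and the $\gamma^{-1},\gamma^{-1}\omega$ arguments in Equation \eqref{eq:pullback:cochain} consistently throughout, so that the cocycle identities for $\tau$ and $\widehat\sigma$ come out with the correct defect terms; this is routine but error-prone, and I would handle it by working entirely in the inhomogeneous complex with the explicit formulas from Lemma \ref{lem:cocycle:lift}.
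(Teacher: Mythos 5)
Your proposal is correct and follows essentially the same route as the paper: take a bounded primitive $u$ of the difference of the two pullback cocycles, adjust the section lifts $\overline{s}(\sigma_k(\gamma^{-1},\omega)^{-1})$ by $i(-u(\gamma)(\omega))$, form the intertwiner $\varphi(\omega)$ as a sup over the (countable) group of the composed differences of these lifts, use the boundedness of $u$ to see the sup is finite and hence a non-decreasing map commuting with integer translations, descend to $\bbS^1$, and symmetrize. The only presentational difference is that the paper packages the common central defect into an extension $\widetilde{\Gamma}$ of $\Gamma$ by $\textup{L}^\infty(\Omega,\bbZ)$ so that both lifts become honest cocycles before taking the sup, whereas you carry the (equal) defects explicitly and cancel them by centrality; these are the same argument.
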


\begin{proof}
By hypothesis, we know that 
$$
\textup{H}^2_b(\sigma_1)(e^b_{\bbZ})=\textup{H}^2_b(\sigma_2)(e^b_{\bbZ})\ .
$$
Thus there exists a bounded function $u:\Gamma \rightarrow \textup{L}^\infty(\Omega,\bbZ)$ such that 
$$
\overC_b^2(\sigma_1)(\overline{\varepsilon})-\overC_b^2(\sigma_2)(\overline{\varepsilon})=\overline{\delta}^1u \ .
$$
Recall that $\textup{L}^\infty(\Omega,\bbZ)$ has a natural structure of $\Gamma$-module with the $\Gamma$-action given by
$$
(\gamma f)(\omega):=f(\gamma^{-1} \omega) \ .
$$

As a direct consequence of Section \ref{sec:extension}, there exists a group $\widetilde{\Gamma}$  which is an extension of $\Gamma$ by $\textup{L}^\infty(\Omega,\bbZ)$, that is  it fits into a short exact sequence of groups
$$
\xymatrix{
0 \ar[r] & \textup{L}^\infty(\Omega,\bbZ) \ar[r]^{\ \ \ \ j} & \widetilde{\Gamma} \ar[r] & \Gamma \ar[r] & 0 \\
}
$$
and there exist two sections 
$$
s_1,s_2:\Gamma \rightarrow \widetilde{\Gamma} \ ,
$$
such that $s_2(\gamma)=s_1(\gamma)j(u(\gamma))$ for every $\gamma \in \Gamma$. Additionally, for $k=1,2$, the section $s_k$ determines the group law on $\widetilde{\Gamma}$. Indeed given $\gamma,\lambda \in \Gamma$ and $f,g \in \textup{L}^\infty(\Omega,\bbZ)$ it holds
\begin{equation}\label{eq:group:law}
\left(s_k(\lambda^{-1})j(f)\right)\left(s_k(\gamma^{-1})j(g)\right)=s_k(\lambda^{-1} \gamma^{-1})j(\overC^2_b(\sigma_k)(\overline{\varepsilon})(\gamma,\lambda)+(\gamma f)+g) \ .
\end{equation}
The presence of the inverses of both $\gamma$ and $\lambda$ in the above formula is due to the definition of the pullback $\overC^2_b(\sigma_k)(\overline{\varepsilon})$. 

Notice that we can endow $(\Omega,\mu)$ with a natural $\widetilde{\Gamma}$ action which factors through the given $\Gamma$-action, that is 
$$
\left(s_k(\gamma)j(f)\right)\omega:=\gamma \omega \ ,
$$
for every $\gamma \in \Gamma, f \in \textup{L}^\infty(\Omega,\bbZ)$ and $\omega \in \Omega$. In this way $(\Omega,\mu)$ becomes a standard Borel probability $\widetilde{\Gamma}$-space.

For $k=1,2$ fixed, our first goal is to construct a measurable cocycle $\widetilde{\sigma}_k:\widetilde{\Gamma} \times \Omega \rightarrow \widetilde{H}$ such that the following diagram commutes
$$
\xymatrix{
0 \ar[r] & \textup{L}^0(\Omega,\bbZ) \ar[r]_{i_\Omega} & \textup{L}^0(\Omega,\widetilde{H}) \ar[r]_{p_\Omega} & \textup{L}^0(\Omega,H) \ar[r] & 0\\
0 \ar[r] & \textup{L}^\infty(\Omega,\bbZ) \ar[u] \ar[r]_j & \widetilde{\Gamma} \ar[u]^{\widetilde{\sigma}_k} \ar[r] & \Gamma \ar[u]^{\sigma_k} \ar[r] & 0 \\
}
$$
We define
$$
\widetilde{\sigma}_k(s_k(\gamma^{-1})j(f),\omega)^{-1}:=\overline{s}(\sigma_k(\gamma^{-1},\omega)^{-1})i(f(\omega)) \ , 
$$
for every $\gamma \in \Gamma, f \in \textup{L}^\infty(\Omega,\bbZ)$ and almost every $\omega \in \Omega$. We claim that $\widetilde{\sigma}_k$ satisfies Equation \eqref{eq:measurable:cocycle}. Let $\gamma,\lambda \in \Gamma$ and let $f,g \in \textup{L}^\infty(\Omega,\bbZ)$. It holds that
\begin{align*}
&\widetilde{\sigma}_k(s_k(\lambda^{-1})j(f)s_k(\gamma^{-1})j(g),\omega)^{-1}=\\
=&\widetilde{\sigma}_k(s_k(\lambda^{-1} \gamma^{-1} )j(\overC_b^2(\sigma_k)(\overline{\varepsilon})(\gamma,\lambda)+(\gamma f)+g), \omega)^{-1}=\\
=&\overline{s}(\sigma_k(\lambda^{-1} \gamma^{-1},\omega)^{-1})i(\overC_b^2(\sigma_k)(\overline{\varepsilon})(\gamma,\lambda)(\omega)+f(\gamma^{-1} \omega)+g(\omega))=\\
=&\overline{s}(\sigma_k(\gamma^{-1}, \omega)^{-1})\overline{s}(\sigma_k(\lambda^{-1},\gamma^{-1} \omega)^{-1})i(f(\gamma^{-1} \omega)+g(\omega))=\\
=&\overline{s}(\sigma_k(\gamma^{-1},\omega)^{-1})i(g(\omega))\overline{s}(\sigma_k(\lambda^{-1},\gamma^{-1}\omega)^{-1})i(f(\gamma^{-1}\omega))=\\
=&\widetilde{\sigma}_k(s_k(\gamma^{-1})j(g),\omega)^{-1}\widetilde{\sigma}(s_k(\lambda^{-1})j(f),(s_k(\gamma^{-1})j(g)) \cdot\omega)^{-1} \ ,
\end{align*}
where we used Equation \eqref{eq:group:law} in the first line, then we applied the definition of $\widetilde{\sigma}_k$ to move from the first line to the second one, we exploited the definition of $\overC_b^2(\sigma)(\overline{\varepsilon})$ to pass from the second line to the third one and we concluded thanks to the fact that $i(\bbZ)$ is central in $\widetilde{H}$. Hence $\widetilde{\sigma}_k$ satisfies Equation \eqref{eq:measurable:cocycle}. Additionally such extension is measurable, since it is defined in terms of measurable functions. Thus $\widetilde{\sigma}_k$ is a measurable cocycle and the claim is proved. 

The next step in the proof is to show that, for $\omega \in \Omega$ fixed and for any $x \in \bbR$, the quantity 
$$\widetilde{\varphi}(\alpha,\omega)(x):=\widetilde{\sigma}_1(\alpha,\omega)^{-1}\widetilde{\sigma}_2(\alpha,\omega)(x)$$
is bounded for every $\alpha \in \widetilde{\Gamma}$. Since $\widetilde{\varphi}$ is defined is terms of increasing homeomorphisms that commute with integer translations, it is sufficient to check the boundedness only at $x=0$. 

We first notice that $\widetilde{\varphi}(\alpha,\omega)$ depends only on the projection of $\alpha$ on the group $\Gamma$. More precisely, consider $\beta=\alpha j(f)$ for some $f \in \textup{L}^\infty(\Omega,\bbZ)$. Then it follows that
\begin{align*}
\widetilde{\varphi}(\beta,\omega)&=\widetilde{\sigma}_1(\alpha j(f),\omega)^{-1}\widetilde{\sigma}_2(\alpha j(f),\omega)=\\
&=i(-f(\omega))\widetilde{\sigma}_1(\alpha,\omega)^{-1}\widetilde{\sigma}_2(\alpha,\omega)i(f(\omega))=\\
&=\widetilde{\sigma}_1(\alpha,\omega)^{-1}\widetilde{\sigma}_2(\alpha,\omega)=\widetilde{\varphi}(\alpha,\omega) \ ,
\end{align*}
where we used the fact that $\widetilde{\sigma}_1$ and $\widetilde{\sigma}_2$ are cocycles and that $j(\textup{L}^\infty(\Omega,\bbZ))$ acts trivially on $\Omega$ to move from the first line to the second one and we concluded exploiting the fact $i(\bbZ)$ is central in $\widetilde{H}$. Thus the function $\widetilde{\varphi}(\alpha,\omega)$ depends only on the projection of $\widetilde{\Gamma}$ on $\Gamma$.

Consider now an element $\gamma \in \Gamma$ and fix $\alpha=s_2(\gamma^{-1})$, where $s_2$ is one of the two sections introduced at the beginning of the proof. By evaluating $\widetilde{\sigma}_2$ on $\alpha$ we get
$$
\widetilde{\sigma}_2(\alpha,\omega)(0)=\overline{s}(\sigma(\gamma^{-1},\omega)^{-1})^{-1}(0) \in (-1,0] \ .
$$
If we now plug in the relation $s_2(\gamma^{-1})=s_1(\gamma^{-1})j(u(\gamma^{-1}))$ into the cocycle $\widetilde{\sigma}_1$ we obtain 
\begin{align*}
\widetilde{\sigma}_1(\alpha,\omega)^{-1}&=\widetilde{\sigma}_1(s_2(\gamma^{-1}),\omega)^{-1}=\\
&=\widetilde{\sigma}_1(s_1(\gamma^{-1})j(u(\gamma^{-1})),\omega)^{-1}=\overline{s}(\sigma_1(\gamma^{-1},\omega)^{-1})i(u(\gamma^{-1})(\omega)) \ .
\end{align*}

Recall that for any $x \in [-1,1)$, we have that $\overline{s}(\sigma_1(\gamma^{-1},\omega)^{-1})(x) \in [-1,2)$. As a consequence 
$$
\widetilde{\sigma}_1(\alpha,\omega)^{-1}(\overline{s}(\sigma(\gamma^{-1},\omega)^{-1})^{-1}(0)) \in [u(\gamma^{-1})(\omega)-1,u(\gamma^{-1})(\omega)+2) \ , 
$$
The boundedness of $u$ implies that the above interval is bounded, thus for any fixed $\omega \in \Omega$ the quantity $\widetilde{\varphi}(\alpha,\omega)(0)$ is bounded for every $\alpha \in \widetilde{\Gamma}$. 

As a result we have a well-defined map
$$
\widetilde{\varphi}:\bbR \times \Omega \rightarrow \bbR \ , \ \ \ \widetilde{\varphi}(x,\omega):=\sup_{\alpha \in \widetilde{\Gamma}} \widetilde{\varphi}(\alpha,\omega)(x) \ ,
$$
Being defined in terms of measurable functions, the function $\widetilde{\varphi}$ is measurable. Moreover, for almost every $\omega$, the slice $\widetilde{\varphi}(\omega)(x):=\widetilde{\varphi}(x,\omega)$ is a non-decreasing map commuting with integer translations. Indeed it is the supremum of increasing homeomorphism commuting with integer translations. 

For any $\beta \in \widetilde{\Gamma}, x \in \bbR$ and almost every $\omega \in \Omega$ we have that 
\begin{align}\label{eq:lift:semicohomology}
\left(\widetilde{\varphi}(\beta \omega)\widetilde{\sigma}_2(\beta,\omega)\right)(x)&=\sup_{\alpha \in \widetilde{\Gamma}} \left(\widetilde{\sigma}_1(\alpha,\beta \omega)^{-1}\widetilde{\sigma}_2(\alpha,\beta\omega)\widetilde{\sigma}_2(\beta,\omega)\right)(x)=\\
&=\sup_{\alpha \in \widetilde{\Gamma}} \left(\widetilde{\sigma}_1(\alpha,\beta \omega)^{-1}\widetilde{\sigma}_2(\alpha\beta,\omega)\right)(x)= \nonumber\\
&=\sup_{\theta \in \widetilde{\Gamma}} \left(\widetilde{\sigma}_1(\theta \beta^{-1},\beta \omega)^{-1}\widetilde{\sigma}_2(\theta,\omega)\right)(x)= \nonumber\\
&=\sup_{\theta \in \widetilde{\Gamma}} \left(\widetilde{\sigma}_1(\beta^{-1},\beta \omega)^{-1}\widetilde{\sigma}_1(\theta,\omega)^{-1}\widetilde{\sigma}_2(\theta,\omega)\right)(x)= \nonumber \\
&=\left(\widetilde{\sigma}_1(\beta,\omega) \widetilde{\varphi}(\omega)\right)(x) \ . \nonumber
\end{align}
In the above computation we used Equation \eqref{eq:measurable:cocycle} to move from the first line to the second one, then we changed variable from $\alpha$ to $\theta=\alpha\beta$ in the third line and we exploited again Equation \eqref{eq:measurable:cocycle} to conclude. 

Since $\widetilde{\varphi}(\omega)$ is a non-decreasing map commuting with integer translations, it descends to a non-decreasing degree one map $\varphi(\omega):\bbS^1 \rightarrow \bbS^1$ by \cite[Lemma 2.4]{BFH}. Moreover, as a consequence of Equation \eqref{eq:lift:semicohomology}, it follows that 
$$
\varphi(\gamma \omega)\sigma_2(\gamma,\omega)=\sigma_1(\gamma,\omega)\varphi(\omega) \ ,
$$
that is $\sigma_1$ is left semicohomologous to $\sigma_2$. To find a right semicohomology is sufficient to apply the same reasoning to $\widetilde{\varphi}^\ast(\alpha,\omega)=\sup_{\alpha \in \widetilde{\Gamma}}\widetilde{\sigma}_2(\alpha,\omega)^{-1}\widetilde{\sigma}_1(\alpha,\omega)$. Thus $\sigma_1$ is both right and left semicohomologous to $\sigma_2$. This finishes the proof. 
\end{proof}

To conclude the proof of our Theorem \ref{teor:main:intro}, we are left to show that if two measurable cocycle are semicohomologous, then they determine the same parametrized Euler class. We will actually need to assume that the space $(\Omega,\mu)$ is $\Gamma$-ergodic in order to get the statement. Additionally, we are going to suppose that one of the two does not admit any equivariant family of points. From this assumption we will get that the other one has the same properties and they both share the same parametrized Euler class. 

\begin{prop}\label{prop:ghys:nonvanishing}
Let $\Gamma$ be a finitely generated group and let $(\Omega,\mu)$ be an ergodic standard Borel probability $\Gamma$-space. Given $\sigma_1,\sigma_2:\Gamma \times \Omega \rightarrow H$ two measurable cocycles, assume that $\sigma_1$ is left semicohomologous to $\sigma_2$ and that $\sigma_1$ does not admits any equivariant family of points. Then it holds 
$$
\textup{H}^2_b(\sigma_1)(e^b_{\bbZ})=\textup{H}^2_b(\sigma_2)(e^b_{\bbZ}) \neq 0 \ .
$$
\end{prop}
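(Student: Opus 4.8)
The plan is to prove the two assertions separately. The non-vanishing is immediate: by hypothesis $\sigma_1$ admits no equivariant family of points, so the equivalence of (1) and (3) in Theorem~\ref{teor:vanishing:euler} already gives $\textup{H}^2_b(\sigma_1)(e^b_{\bbZ})\neq 0$; once the equality of the two classes is established, the same conclusion transfers to $\sigma_2$. Thus the substantive task is to show that a \emph{one-sided} semicohomology forces $\textup{H}^2_b(\sigma_1)(e^b_{\bbZ})=\textup{H}^2_b(\sigma_2)(e^b_{\bbZ})$ — this is the converse direction to Theorem~\ref{teor:ghys:pullback:semicohomology}, and concretely it amounts to producing a uniformly bounded $u\colon\Gamma\to\textup{L}^\infty(\Omega,\bbZ)$ with $\overline{\delta}^1 u=\overC^2_b(\sigma_1)(\overline{\varepsilon})-\overC^2_b(\sigma_2)(\overline{\varepsilon})$.

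First I would put the data in the shape dictated by the pullback formula~\eqref{eq:euler:pullback}, setting $g_i(\gamma,\omega):=\sigma_i(\gamma^{-1},\omega)^{-1}$; then the cocycle identity for $\sigma_i$ becomes $g_i(\gamma\lambda,\omega)=g_i(\gamma,\omega)\,g_i(\lambda,\gamma^{-1}\omega)$ and, abbreviating $E_i(\gamma,\lambda)(\omega):=\overC^2_b(\sigma_i)(\overline{\varepsilon})(\gamma,\lambda)(\omega)$, we have $E_i(\gamma,\lambda)(\omega)=\overline{\varepsilon}\big(g_i(\gamma,\omega),g_i(\lambda,\gamma^{-1}\omega)\big)$. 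Composing the left semicohomology relation $\sigma_1(\gamma,\omega)\varphi(\omega)=\varphi(\gamma\omega)\sigma_2(\gamma,\omega)$ with the inverses of $\sigma_1(\gamma,\omega)$ and of $\sigma_2(\gamma,\omega)$, and then replacing $\gamma$ by $\gamma^{-1}$, rewrites it as the identity
\[
\varphi(\omega)\circ g_2(\gamma,\omega)=g_1(\gamma,\omega)\circ\varphi(\gamma^{-1}\omega)
\]
of monotone degree one maps of $\bbS^1$.

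Next I would lift this identity. Since a monotone degree one map of $\bbS^1$ lifts to a monotone self-map of $\bbR$ commuting with $\tau$, uniquely up to postcomposition with a power of $\tau$, after choosing the measurable normalized family $\widetilde{\varphi}(\omega)$ with $\widetilde{\varphi}(\omega)(0)\in[0,1)$ there is a measurable $d\colon\Gamma\times\Omega\to\bbZ$ with
\[
\widetilde{\varphi}(\omega)\,\overline{s}(g_2(\gamma,\omega))=\overline{s}(g_1(\gamma,\omega))\,\widetilde{\varphi}(\gamma^{-1}\omega)\,\tau^{d(\gamma,\omega)} .
\]
Evaluating at $0$ and using that both $\overline{s}(h)$ and $\widetilde{\varphi}(\omega)$ send $[0,1)$ into $[0,2)$ forces $|d(\gamma,\omega)|\le 1$, so $u(\gamma):=-d(\gamma,\cdot)$ lies in $\textup{L}^\infty(\Omega,\bbZ)$ with norm at most $1$, uniformly in $\gamma$. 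I would then write the displayed lifted identity for the product $\gamma\lambda$, substitute $g_i(\gamma\lambda,\omega)=g_i(\gamma,\omega)g_i(\lambda,\gamma^{-1}\omega)$ together with $\overline{s}(h_1)\overline{s}(h_2)=\overline{s}(h_1h_2)\tau^{\overline{\varepsilon}(h_1,h_2)}$, and collect the powers of the central element $\tau$; the bookkeeping yields
\[
d(\gamma\lambda,\omega)=E_1(\gamma,\lambda)(\omega)-E_2(\gamma,\lambda)(\omega)+d(\gamma,\omega)+d(\lambda,\gamma^{-1}\omega),
\]
which, once the $\Gamma$-action $(\gamma f)(\omega)=f(\gamma^{-1}\omega)$ on $\textup{L}^\infty(\Omega,\bbZ)$ is unravelled, says exactly $\overline{\delta}^1 u=\overC^2_b(\sigma_1)(\overline{\varepsilon})-\overC^2_b(\sigma_2)(\overline{\varepsilon})$. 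Hence the two parametrized Euler classes coincide in $\textup{H}^2_b(\Gamma;\textup{L}^\infty(\Omega,\bbZ))$, and by the first paragraph they are nonzero.

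The steps requiring the most care are the clerical ones: keeping straight the inverse conventions built into~\eqref{eq:euler:pullback}, checking that the normalized lift $\widetilde{\varphi}$ — and therefore $d$ — is genuinely measurable in $(\gamma,\omega)$, and establishing the uniform bound $|d|\le 1$, since it is precisely this bound that makes $u$ a cochain of the \emph{bounded} complex and so yields an equality of bounded cohomology classes rather than merely of ordinary ones. The ergodicity hypothesis plays no essential role in this argument beyond what is already used in Theorem~\ref{teor:vanishing:euler}.
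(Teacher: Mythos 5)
Your overall strategy is the paper's: lift the semicohomology relation, extract an integer-valued discrepancy $d$, and check that $u=-d$ is a uniformly bounded primitive of $\overC^2_b(\sigma_1)(\overline{\varepsilon})-\overC^2_b(\sigma_2)(\overline{\varepsilon})$. But there is a genuine gap at the step where you introduce $d(\gamma,\omega)$. You justify its existence by asserting that a monotone degree one map of $\bbS^1$ lifts to $\bbR$ ``uniquely up to postcomposition with a power of $\tau$''. That uniqueness holds for homeomorphisms, but $\varphi(\omega)$ is only a non-decreasing degree one map: it may be non-injective and discontinuous, and two lifts of such a circle map can differ by a \emph{non-constant} integer-valued function (for instance $x\mapsto\lfloor x\rfloor$ and $x\mapsto\lceil x\rceil$ are both non-decreasing, commute with $\tau$, and lift the constant map $0$). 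Concretely, the two sides $\widetilde{\varphi}(\omega)\,\overline{s}(g_2(\gamma,\omega))$ and $\overline{s}(g_1(\gamma,\omega))\,\widetilde{\varphi}(\gamma^{-1}\omega)$ agree on $\bbS^1$ but a priori differ by an integer depending on the point $x\in\bbR$, and if $\varphi(\omega)$ were constant this dependence would be unavoidable. So the single integer $d(\gamma,\omega)$ your argument needs does not exist for free.

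This is exactly where the hypotheses you declare inessential enter. The paper first uses ergodicity together with the assumption that $\sigma_1$ admits no equivariant family of points to show that the set of $\omega$ for which $\varphi(\omega)$ is constant is $\Gamma$-invariant and hence null (were it conull, $\omega\mapsto\varphi(\omega)(x_0)$ would be a $\sigma_1$-equivariant family). The resulting non-constancy of $\varphi(\omega)$ furnishes points $t,s$ with $\widetilde{\varphi}(\gamma^{-1}\omega)(t)-\widetilde{\varphi}(\gamma^{-1}\omega)(s)\in(0,1)$, and a sandwiching argument, using that $\overline{s}(\sigma_1(\gamma^{-1},\omega)^{-1})$ is strictly increasing while the other composition is at least non-decreasing, pins the discrepancy down to a single integer independent of $x$. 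Your remaining steps --- the bound $|d|\le 1$, the coboundary bookkeeping, and the reduction of the non-vanishing statement to Theorem \ref{teor:vanishing:euler} --- are sound, but the proof is incomplete without this point-independence argument, and your closing claim that ergodicity plays no essential role here is incorrect.
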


\begin{proof}
Since $\sigma_1$ is left semicohomologous to $\sigma_2$ there exists a measurable function $\varphi:\bbS^1 \times \Omega \rightarrow \bbS^1$ such that 
\begin{equation}\label{eq:semicohomology:local:left}
\varphi(\gamma \omega)\sigma_2(\gamma,\omega)=\sigma_1(\gamma,\omega)\varphi(\omega) \ ,
\end{equation}
for every $\gamma \in \Gamma$ and almost every $\omega \in \Omega$. We claim that $\varphi(\omega)$ does not coincide with the constant map for almost every $\omega \in \Omega$. Define
$$
A_0: = \{\omega \in \Omega \ | \ \textup{$\varphi(\omega)$ is constant} \} \ .
$$
Cleary the above set is measurable by the measurability of $\varphi$. By Equation \eqref{eq:semicohomology:local:left} $\varphi$ is constant if and only if $\varphi(\gamma \omega)$ is constant. This precisely means that $A_0$ is a $\Gamma$-invariant set. By the ergodicity assumption $A_0$ has either full or null measure. 

By contradiction suppose that $A_0$ has full measure. Then for almost every $\omega \in \Omega$ the map $\varphi(\omega)$ is constant. For $x_0 \in \bbS^1$ fixed, we define
$$
r:\Omega \rightarrow \bbS^1 \ , \ \ r(\omega):=\varphi(\omega)(x_0) \ .
$$
As a consequence of Equation \eqref{eq:semicohomology:local:left} it follows that $r$ is a $\sigma_1$-equivariant family of points. Since this is in contradiction with our hypothesis, then $A_0$ must be of null measure. Hence for almost every $\omega \in \Omega$ the function $\varphi(\omega)$ cannot be constant. 

For a given $\omega \in \Omega$, we can consider a lift $\widetilde{\varphi}(\omega):\bbR \rightarrow \bbR$ of $\varphi(\omega)$ so that $\widetilde{\varphi}(\omega)(0) \in [0,1)$. We denote by $\overline{s}:H \rightarrow \widetilde{H}$ the usual section $\overline{s}(f)(0) \in [0,1)$. Then there exists a function $u:\Gamma \times \Omega \times \bbR \rightarrow \bbZ$ such that
$$
\left(\overline{s}(\sigma_1(\gamma^{-1},\omega)^{-1})\widetilde{\varphi}(\gamma^{-1} \omega)\right)(x)=\left(\widetilde{\varphi}(\omega)\overline{s}(\sigma_2(\gamma^{-1},\omega)^{-1}\right)(x)+u(\gamma,\omega)(x) \ ,
$$
for every $\gamma \in \Gamma, x \in \bbR$ and almost every $\omega \in \Omega$. The (temporary) dependence of $u$ on the variable $x \in \bbZ$ relies on the fact that we are not dealing with homeomorphisms anymore, but only with lifts of suitable functions which coincide on the circle. 

We claim that the function $u$ is actually independent of the real variable. To show this, recall that $\varphi(\gamma^{-1} \omega)$ is not constant for almost every $\omega \in \Omega$. Thus we can find $t,s  \in (0,1)$ such that $\widetilde{\varphi}(\gamma^{-1}\omega)(t)-\widetilde{\varphi}(\gamma^{-1}\omega)(s) \in (0,1)$, for almost every $\omega \in \Omega$ (the point $t,s$ depend both on $\gamma$ and $\omega$ but we do not want make heavier an already heavy notation). Since $\overline{s}(\sigma_1(\gamma^{-1},\omega)^{-1})$ is a strictly increasing homeomorphism, it must hold
\begin{equation}\label{eq:sandwich:zeroone}
0< \overline{s}(\sigma_1(\gamma^{-1},\omega)^{-1})\widetilde{\varphi}(\gamma^{-1} \omega)(t)-\overline{s}(\sigma_1(\gamma^{-1},\omega)^{-1})\widetilde{\varphi}(\gamma^{-1} \omega)(s)< 1 \ .
\end{equation}
Similarly, since $\widetilde{\varphi}(\omega)\overline{s}(\sigma_2(\gamma^{-1},\omega)^{-1})$ is non-decreasing, we get that 
$$
0 \leq \widetilde{\varphi}(\omega)\overline{s}(\sigma_2(\gamma^{-1},\omega)^{-1})(t)-\widetilde{\varphi}(\omega)\overline{s}(\sigma_2(\gamma^{-1},\omega)^{-1})(s) \leq 1 \ .
$$
By the fact that $\overline{s}(\sigma_1(\gamma^{-1},\omega)^{-1})\widetilde{\varphi}(\gamma^{-1} \omega)$ and $\widetilde{\varphi}(\omega)\overline{s}(\sigma_2(\gamma^{-1},\omega)^{-1})$ have the same projection, the above inequality must be strict, otherwise Equation \eqref{eq:sandwich:zeroone} would be violated. Hence it must hold 
\begin{equation}\label{eq:zeroone:sandwich}
0 < \widetilde{\varphi}(\omega)\overline{s}(\sigma_2(\gamma^{-1},\omega)^{-1})(t)-\widetilde{\varphi}(\omega)\overline{s}(\sigma_2(\gamma^{-1},\omega)^{-1})(s) < 1 \ .
\end{equation}
Subtracting Equation \eqref{eq:sandwich:zeroone} to Equation \eqref{eq:zeroone:sandwich} we get that 
$$
u(\gamma,\omega)(t)-u(\gamma,\omega)(s) \in (-1,1) \ ,
$$
for every $\gamma \in \Omega$ and almost every $\omega \in \Omega$. Since $u$ is integer-valued, we must have that 
$$
u(\gamma,\omega)(t)=u(\gamma,\omega)(s) \ .
$$
In this way we proved that the function $u(\gamma,\omega)$ is constant on the set
$$
E:=(t + \bbZ) \sqcup (s + \bbZ) \ .
$$
Let now $x \in \bbR \setminus E$. We know that there exist two points in $E \cap (x,x+1)$ which are translated of $t$ and $s$, respectively. The fact that $\widetilde{\varphi}(\gamma^{-1}\omega)$ is non-decreasing guarantees that we can find a point $e \in E$ such that 
$$
e -x \in (0,1) \ ,
$$
and similarly 
$$
\widetilde{\varphi}(\gamma^{-1}\omega)(e)-\widetilde{\varphi}(\gamma^{-1}\omega)(x) \in (0,1) \ .
$$
Following the same reasoning as above, we can conclude that 
$$
u(\gamma,\omega)(e)-u(\gamma,\omega)(x) \in (-1,1) \ ,
$$
hence $u(\gamma,\omega)$ is constant on $\bbR \setminus E$. Thus $u(\gamma,\omega)$ does not depend on $x \in \bbR$ and the claim is proved. Thus it follows that 
\begin{equation}\label{eq:semicohomology:inverse}
\overline{s}(\sigma_1(\gamma^{-1},\omega)^{-1})\widetilde{\varphi}(\gamma^{-1} \omega)=\widetilde{\varphi}(\omega)\overline{s}(\sigma_2(\gamma^{-1},\omega)^{-1})i(u(\gamma,\omega)) \ .
\end{equation}

Given $\gamma,\lambda \in \Gamma$, we start evaluating the left-hand side of Equation \eqref{eq:semicohomology:inverse} on the product $\lambda^{-1}\gamma^{-1}$, that is 
\begin{align*}
&\textup{LHS}=\\
=&\overline{s}(\sigma_1(\lambda^{-1}\gamma^{-1},\omega)^{-1})\widetilde{\varphi}(\lambda^{-1}\gamma^{-1}\omega)=\\
=&i(-\overC^2(\sigma_1)(\overline{\varepsilon})(\gamma,\lambda))\overline{s}(\sigma_1(\gamma^{-1},\omega)^{-1})\overline{s}(\sigma_1(\lambda^{-1},\gamma^{-1}\omega)^{-1})\widetilde{\varphi}(\lambda^{-1}\gamma^{-1}\omega)=\\
=&\overline{s}(\sigma_1(\gamma^{-1},\omega)^{-1})\widetilde{\varphi}(\gamma^{-1}\omega)\overline{s}(\sigma_2(\lambda^{-1},\gamma^{-1}\omega)^{-1})i(-\overC^2(\sigma_1)(\overline{\varepsilon})(\gamma,\lambda)(\omega)+u(\lambda)(\gamma^{-1}\omega)) \ 
,
\end{align*}
where we exploited the definition of $\overline{\varepsilon}$ to pass from the first line to the second one and we concluded exploting Equation \eqref{eq:semicohomology:inverse}. Similarly we can evaluate the right-hand side of Equation \eqref{eq:semicohomology:inverse} on the product $\lambda^{-1}\gamma^{-1}$ and we get that 
\begin{align*}
&\textup{RHS}=\\
=&\widetilde{\varphi}(\omega)\overline{s}(\sigma_2(\lambda^{-1}\gamma^{-1},\omega)^{-1})i(u(\gamma\lambda)(\omega))=\\
=&\widetilde{\varphi}(\omega)\overline{s}(\sigma_2(\gamma^{-1},\omega)^{-1})\overline{s}(\sigma_2(\lambda^{-1},\gamma^{-1}\omega)^{-1})i(-\overC^2(\sigma_2)(\overline{\varepsilon})(\gamma,\lambda)(\omega)+u(\gamma\lambda)(\omega))=\\
=&\overline{s}(\sigma_1(\gamma^{-1},\omega)^{-1})\widetilde{\varphi}(\gamma^{-1}\omega)\overline{s}(\sigma_2(\lambda^{-1},\gamma^{-1}\omega)^{-1})i(-\overC^2(\sigma_2)(\overline{\varepsilon})(\gamma,\lambda)(\omega)+u(\gamma\lambda)(\omega)-u(\gamma)(\omega)) \ ,
\end{align*}
where we used the same strategy as in the previous computation. By comparing the two computations we get
$$
\overC^2(\sigma_1)(\overline{\varepsilon})-\overC^2(\sigma_2)(\overline{\varepsilon})=\overline{\delta}^1u \ .
$$
Thus $\textup{H}^2(\sigma_1)(e_{\bbZ})=\textup{H}^2(\sigma_2)(e_{\bbZ})$. To prove that the same thing holds for their bounded versions we need to show that $u$ is bounded.  This is a straightforward consequence of the fact that 
$$
\overline{s}(\sigma_1(\gamma^{-1},\omega)^{-1})\widetilde{\varphi}(\gamma^{-1} \omega)(0) \ , \ \widetilde{\varphi}(\omega)\overline{s}(\sigma_2(\gamma^{-1},\omega)^{-1})(0) \in [0,2) \ .
$$
Above we used that $\widetilde{\varphi}(\omega)(0) \in [0,1)$ for almost every $\omega \in \Omega$ for our choice of lift. Hence $u$ is bounded and it holds 
$$
\overC_b^2(\sigma_1)(\overline{\varepsilon})-\overC_b^2(\sigma_2)(\overline{\varepsilon})=\overline{\delta}^1u \ .
$$
which implies the main statement. This concludes the proof. 
\end{proof}

This shows \emph{Ad. 3} of Theorem \ref{teor:main:intro} and concludes its proof. From Proposition \ref{prop:ghys:nonvanishing} we can argue the following

\begin{cor}\label{cor:leftsemicohomology:semicohomology}
Let $\Gamma$ be a finitely generated group and let $(\Omega,\mu)$ be an ergodic standard Borel probability $\Gamma$-space. Let $\sigma_1,\sigma_2:\Gamma \times \Omega \rightarrow H$ be two measurable cocycles and suppose that $\sigma_1$ is left semicohomologous to $\sigma_2$. Assume that $\sigma_1$ does not have any equivariant family of points. Then $\sigma_1$ is actually semicohomologous to $\sigma_2$.
\end{cor}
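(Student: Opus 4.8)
The plan is to chain together the two main results of this section: Proposition \ref{prop:ghys:nonvanishing}, which promotes a one-sided semicohomology into an equality of parametrized Euler classes, and Theorem \ref{teor:ghys:pullback:semicohomology}, which recovers (two-sided) semicohomology from such an equality.

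Concretely, first I would apply Proposition \ref{prop:ghys:nonvanishing} to the pair $\sigma_1,\sigma_2$. Its hypotheses are exactly those of the corollary: $(\Omega,\mu)$ is an ergodic standard Borel probability $\Gamma$-space, $\sigma_1$ is left semicohomologous to $\sigma_2$, and $\sigma_1$ admits no equivariant family of points. The conclusion gives
$$
\textup{H}^2_b(\sigma_1)(e^b_{\bbZ})=\textup{H}^2_b(\sigma_2)(e^b_{\bbZ})\neq 0 \ .
$$
In particular the two parametrized Euler classes coincide. (The nonvanishing is not needed for what follows, but it is consistent with the hypothesis: by Theorem \ref{teor:vanishing:euler} a vanishing class would force an equivariant family of points for $\sigma_1$.)

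Then I would feed this equality into Theorem \ref{teor:ghys:pullback:semicohomology}, whose only hypothesis is precisely $\textup{H}^2_b(\sigma_1)(e^b_{\bbZ})=\textup{H}^2_b(\sigma_2)(e^b_{\bbZ})$ (no ergodicity is required there). Its conclusion is that $\sigma_1$ and $\sigma_2$ are semicohomologous, which is exactly the assertion of the corollary. Since both inputs are already established in the excerpt, there is no real obstacle here; the content of the corollary is just the observation that, in the ergodic non-elementary regime, left semicohomology is automatically symmetric — a phenomenon that mirrors the representation case in \cite{BFH}, where a one-sided semiconjugacy between non-elementary actions is automatically two-sided.
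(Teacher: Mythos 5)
Your proof is correct and is essentially identical to the paper's own argument: the paper likewise applies Proposition \ref{prop:ghys:nonvanishing} to obtain $\textup{H}^2_b(\sigma_1)(e^b_{\bbZ})=\textup{H}^2_b(\sigma_2)(e^b_{\bbZ})$ and then invokes Theorem \ref{teor:ghys:pullback:semicohomology} to conclude. No changes are needed.
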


\begin{proof}
Since $\sigma_1$ does not admit any equivariant family of points we can apply Proposition \ref{prop:ghys:nonvanishing} to argue that 
$$
\textup{H}^2_b(\sigma_1)(e^b_{\bbZ})=\textup{H}^2_b(\sigma_2)(e^b_{\bbZ})\ .
$$
Thus applying Theorem \ref{teor:ghys:pullback:semicohomology} it follows that $\sigma_1$ and $\sigma_2$ are actually semicohomologous. 
\end{proof}

We are finally ready to sum up our considerations about parametrized Euler class and semicohomology in the following 

\begin{teor}\label{teor:ghys:nonvanishing}
Let $\Gamma$ be a finitely generated group and let $(\Omega,\mu)$ be an ergodic standard Borel probability $\Gamma$-space. Given $\sigma_1,\sigma_2:\Gamma \times \Omega \rightarrow H$ two measurable cocycles, assume that $\sigma_1$ and $\sigma_2$ are semicohomologous. Then it holds
$$
\textup{H}^2_b(\sigma_1)(e^b_{\bbZ})=\textup{H}^2_b(\sigma_2)(e^b_{\bbZ}) \ .
$$
\end{teor}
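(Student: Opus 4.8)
The plan is to argue by a dichotomy on whether $\sigma_1$ admits an equivariant family of points, feeding the two cases into the two results already established in Sections \ref{sec:vanishing:euler} and \ref{sec:ghys:theorem}. Note first that since $\sigma_1$ and $\sigma_2$ are semicohomologous, they are in particular both left and right semicohomologous; I will only need one of these implications in each branch.

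First I would treat the case in which $\sigma_1$ \emph{does} admit a $\sigma_1$-equivariant family of points. Then $\sigma_1$ and $\sigma_2$ are semicohomologous and one of them has an equivariant family, so Theorem \ref{teor:ghys:vanishing} applies verbatim and yields $\textup{H}^2_b(\sigma_1)(e^b_{\bbZ})=\textup{H}^2_b(\sigma_2)(e^b_{\bbZ})=0$; in particular the two classes coincide.

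Next I would treat the complementary case in which $\sigma_1$ does \emph{not} admit any equivariant family of points. Here I use that $(\Omega,\mu)$ is $\Gamma$-ergodic and that $\sigma_1$ is left semicohomologous to $\sigma_2$, which are exactly the hypotheses of Proposition \ref{prop:ghys:nonvanishing}. That proposition gives $\textup{H}^2_b(\sigma_1)(e^b_{\bbZ})=\textup{H}^2_b(\sigma_2)(e^b_{\bbZ})\neq 0$, so again the two classes agree. Combining the two cases proves the claimed equality, and this completes the proof.

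Since all the substantive work (constructing the lift $\widetilde\Gamma$ and the equivariant supremum in Theorem \ref{teor:ghys:pullback:semicohomology}, and the $x$-independence argument for $u$ in Proposition \ref{prop:ghys:nonvanishing}) has already been carried out, there is no genuine obstacle here: the only point requiring a moment's care is to check that the hypotheses of each cited result are met in its branch — semicohomology (both directions) for Theorem \ref{teor:ghys:vanishing}, and ergodicity together with \emph{left} semicohomology for Proposition \ref{prop:ghys:nonvanishing} — both of which are immediate from the assumptions of the present statement.
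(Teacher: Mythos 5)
Your proof is correct and follows exactly the paper's own argument: the same dichotomy on whether $\sigma_1$ admits an equivariant family of points, with the first branch handled by Theorem \ref{teor:ghys:vanishing} and the second by Proposition \ref{prop:ghys:nonvanishing}. Your explicit check that the hypotheses of each cited result (full semicohomology in one branch, ergodicity plus left semicohomology in the other) are satisfied is a welcome touch of care that the paper leaves implicit.
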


\begin{proof}
If $\sigma_1$ admits an equivariant family of points then the statement boils down to Theorem \ref{teor:ghys:vanishing}. If $\sigma_1$ does not admit any equivariant family of points, the claim is obtained by Proposition \ref{prop:ghys:nonvanishing}
\end{proof}

\section{Minimal cocycles}\label{sec:minimal:cocycles}

In the previous section we saw that the parametrized Euler class is a complete invariant of the semicohomology class of a measurable cocycle. In general one could be more interested in understanding when two measurable cocycles are cohomologous rather than semicohomologous. In this section we are going to introduce the notion of \emph{minimality} given by Furstenberg \cite{furst:articolo} and we are going to prove that two ergodic minimal cocycles have the same parametrized Euler class if and only if they are cohomologous. 

It is worth noticing that this will be the exact translation of the fact that minimal representations inducing the same pullback of the Euler class are actually conjugated. 

\begin{deft}\label{def:minimal:cocycle}
Let $\Gamma$ be a finitely generated group and let $(\Omega,\mu)$ be a standard Borel probability $\Gamma$-space. Consider a measurable cocycle $\sigma:\Gamma \times \Omega \rightarrow H$. Let $\mathcal{F}_0(\bbS^1)$ be the collection of not-empty closed subset of $\bbS^1$ endowed with the Borel structure coming from the Hausdorff topology. We say that $\bbS^1$ is $\sigma$-\emph{minimal} or equivalently that $\sigma$ is \emph{minimal} if the only measurable function 
$$
\Phi:\Omega \rightarrow \mathcal{F}_0(\bbS^1) \ , 
$$
which is $\sigma$-equivariant, that is $\Phi(\gamma \omega)=\sigma(\gamma,\omega)\Phi(\omega)$, is the constant function $\Phi(\omega)=\bbS^1$.
\end{deft}

For ergodic minimal cocycles we are going to show that semicohomology and cohomology coincide.

\begin{prop}\label{prop:minimal:semicohomology}
Let $\Gamma$ be a finitely generated group an let $(\Omega,\mu)$ be an ergodic standard Borel probability $\Gamma$-space. Given two measurable cocycles $\sigma_1,\sigma_2:\Gamma \times \Omega \rightarrow H$, suppose that they are minimal. Then $\sigma_1$ is left semicohomologous to $\sigma_2$ if and only if it is cohomologous to $\sigma_2$. 
\end{prop}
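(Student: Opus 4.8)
The plan is to establish the non-trivial implication: if $\sigma_1$ is left semicohomologous to $\sigma_2$, witnessed by a measurable family $\varphi(\omega)\colon\bbS^1\to\bbS^1$ of non-decreasing degree one maps with $\sigma_1(\gamma,\omega)\varphi(\omega)=\varphi(\gamma\omega)\sigma_2(\gamma,\omega)$, then under the two minimality hypotheses each slice $\varphi(\omega)$ is in fact a homeomorphism of $\bbS^1$ for almost every $\omega$, which is exactly the assertion that $\sigma_1$ and $\sigma_2$ are cohomologous. The converse needs no work: a homeomorphism of $\bbS^1$ is in particular a non-decreasing degree one map, so every cohomology is a left semicohomology.

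For the forward direction the idea is that the two ways in which a semicohomology can fail to be a cohomology, namely non-surjectivity and non-injectivity of the slices, are each obstructed by the minimality of one of the two cocycles. First I would rewrite the relation as $\varphi(\gamma\omega)=\sigma_1(\gamma,\omega)\varphi(\omega)\sigma_2(\gamma,\omega)^{-1}$ and consider the closed set $\Phi(\omega):=\overline{\varphi(\omega)(\bbS^1)}$. Because $\sigma_1(\gamma,\omega)$ and $\sigma_2(\gamma,\omega)$ are homeomorphisms, one gets $\Phi(\gamma\omega)=\sigma_1(\gamma,\omega)\Phi(\omega)$, so $\Phi$ is a nonempty $\sigma_1$-equivariant measurable map $\Omega\to\mathcal{F}_0(\bbS^1)$; by $\sigma_1$-minimality $\Phi(\omega)=\bbS^1$ for a.e. $\omega$, so $\varphi(\omega)$ has dense image. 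A non-decreasing degree one map with dense image has no jumps (a jump would leave an open gap in the image), hence is continuous and onto, so $\varphi(\omega)$ is surjective for a.e. $\omega$.

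Next I would look at the constancy locus $K(\omega)\subseteq\bbS^1$, the union of all open arcs on which $\varphi(\omega)$ is constant. It is open, and it cannot be all of $\bbS^1$: by a compactness/overlap argument $K(\omega)=\bbS^1$ would force $\varphi(\omega)$ to be globally constant, contradicting that it has degree one. Hence $\bbS^1\setminus K(\omega)$ is a nonempty closed set, and composing with the homeomorphisms $\sigma_1(\gamma,\omega),\sigma_2(\gamma,\omega)$ as before gives $K(\gamma\omega)=\sigma_2(\gamma,\omega)K(\omega)$, so $\omega\mapsto\bbS^1\setminus K(\omega)$ is a nonempty $\sigma_2$-equivariant measurable map into $\mathcal{F}_0(\bbS^1)$. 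By $\sigma_2$-minimality it equals $\bbS^1$ for a.e. $\omega$, i.e. $K(\omega)=\emptyset$ a.e. A non-decreasing degree one map with empty constancy locus is injective: if it identified two distinct points it would, by weak order preservation, be constant on one of the two arcs joining them. Combining the two steps, $\varphi(\omega)$ is a continuous order-preserving bijection of $\bbS^1$, hence a homeomorphism, for a.e. $\omega$; this family then exhibits $\sigma_1$ and $\sigma_2$ as cohomologous, which completes the proof.

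The routine-but-delicate points I expect to have to address are the measurability of the set-valued maps $\omega\mapsto\overline{\varphi(\omega)(\bbS^1)}$ and $\omega\mapsto\bbS^1\setminus K(\omega)$ with respect to the Effros--Borel structure on $\mathcal{F}_0(\bbS^1)$, which follow from the joint measurability of $(\xi,\omega)\mapsto\varphi(\xi,\omega)$ by standard selection arguments, and — depending on the regularity convention for "non-decreasing degree one map" — the reduction to continuous representatives, which is handled by the surjectivity step above. The genuinely substantive point, and the only place where real care is needed, is the double use of minimality: $\sigma_1$-minimality controls the image, $\sigma_2$-minimality controls the fibers, and neither conclusion is available from ergodicity alone; beyond this I do not anticipate any serious obstacle.
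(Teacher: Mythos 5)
Your proof is correct, and its two substantive steps are exactly the ones in the paper: $\sigma_1$-minimality applied to $\omega\mapsto\overline{\varphi(\omega)(\bbS^1)}$ forces dense image (hence continuity and surjectivity of the slices), and $\sigma_2$-minimality applied to the complement of the constancy locus forces injectivity, the equivariance $K(\gamma\omega)=\sigma_2(\gamma,\omega)K(\omega)$ being the same identity the paper uses. The organization differs in two ways, both to your advantage. First, the paper begins by invoking Corollary \ref{cor:leftsemicohomology:semicohomology} to upgrade the left semicohomology to a two-sided one, which routes the argument through the parametrized Euler class machinery (Proposition \ref{prop:ghys:nonvanishing} and Theorem \ref{teor:ghys:pullback:semicohomology}); you work directly with the left witness $\varphi$, which suffices because once $\varphi(\omega)\in H$ a.e.\ the inverse slices supply the right semicohomology for free. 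Second, for injectivity the paper first runs an ergodicity argument on the invariant set $A_0=\{\omega:\varphi(\omega)\text{ not injective}\}$ before applying minimality, whereas you apply $\sigma_2$-minimality directly to $\omega\mapsto\bbS^1\setminus K(\omega)$, which is always a nonempty closed set; this makes ergodicity of $\Omega$ unnecessary for this implication. One small caveat: when you rule out $K(\omega)=\bbS^1$ by saying a globally constant map cannot have degree one, note that in this paper's convention (Definition \ref{def:semicohomology} and the remark following it) constant maps do count as non-decreasing degree one maps; the correct contradiction is with the surjectivity you established in the first step, which your argument already has available. The measurability issues you flag for the set-valued maps are real but routine, and are treated at the same level of detail in the paper.
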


\begin{proof}
If two minimal cocycles are cohomologous, then they are also semicohomologous. So in particular $\sigma_1$ is left semicohomologous to $\sigma_2$. 
Clearly the relevant implication is the other one. 

Suppose that $\sigma_1$ is left semicohomologous to $\sigma_2$. By the minimality assumption $\sigma_1$ cannot admit an equivariant family of points. By Corollary \ref{cor:leftsemicohomology:semicohomology} $\sigma_1$ is semicohomologous to $\sigma_2$. 

Then we know that there exists a measurable function
$$
\varphi:\bbS^1 \times \Omega \rightarrow \bbS^1 \ , 
$$
such that the slice $\varphi(\omega)$ is a non-decreasing degree one map and 
\begin{equation}\label{eq:left:semicohomology:local}
\varphi(\gamma \omega)\sigma_2(\gamma,\omega)=\sigma_1(\gamma,\omega)\varphi(\omega) \ ,
\end{equation}
for every $\gamma \in \Gamma$ and almost every $\omega \in \Omega$. If we define 
$$
E(\omega):=\overline{\textup{Im}(\varphi(\omega))} \ ,
$$
as the closure of the image of $\varphi(\omega)$, we have that 
$$
\Phi:\Omega \rightarrow \mathcal{F}_0(\bbS^1) \ , \ \ \ \Phi(\omega):=E(\omega) \ ,
$$
is a $\sigma_1$-equivariant closed-valued measurable function (the $\sigma_1$-equivariance follows by Equation \eqref{eq:left:semicohomology:local}). By the minimality of $\sigma_1$ we must have $E(\omega)=\bbS^1$ for almost every $\omega \in \Omega$. This means that the image $\textup{Im}(\varphi(\omega))$ is dense for almost every $\omega \in \Omega$. 

Denote by $\widetilde{\varphi}:\bbR \times \Omega \rightarrow \bbR$ a lift of $\varphi$. Since $\varphi(\omega)$ has dense image in $\bbS^1$, also $\widetilde{\varphi}(\omega):\bbR \rightarrow \bbR$ will have dense image. Additionally, since $\widetilde{\varphi}(\omega)$ is non-decreasing and commutes with integer translations, it must be continuous and surjective (for almost every $\omega$). Thus the same is true for the map $\varphi(\omega)$.

We need only to show that $\varphi(\omega)$ is injective for almost every $\omega \in \Omega$. In that case $\varphi(\omega) \in H$ and the statement will follow. By contradiction suppose that there exists a positive measure subset $A \subset \Omega$ such that $\varphi(\omega)$ is not injective for every of every $\omega \in \Omega$.
Define now 
$$
A_0:=\{ \omega \in \Omega \ | \ \textup{$\varphi(\omega)$ is not injective} \} \ .
$$
The above set is $\Gamma$ invariant. Indeed suppose that $\omega \in A_0$. This means that there exist distinct points $x,y \in \bbS^1$ such that 
$$
\varphi(\omega)(x)=\varphi(\omega)(y) \ . 
$$
We claim that $\gamma \omega \in A_0$. We need to show that $\varphi(\gamma\omega)$ is not injective. Notice that 
\begin{align*}
\varphi(\gamma \omega)(\sigma_2(\gamma,\omega)(x))&=\sigma_1(\gamma,\omega)\varphi(\omega)(x)=\\
&=\sigma_1(\gamma,\omega)\varphi(\omega)(y)=\varphi(\gamma \omega)(\sigma_2(\gamma,\omega)(y)) \ ,
\end{align*}
where we exploited both Equation \eqref{eq:left:semicohomology:local} and the fact that $\omega \in A_0$. Thus $A_0$ is a $\Gamma$-invariant posivite measure subset of $\Omega$, and by ergodicity it must have full measure. This means that $\varphi(\omega)$ is not injective for almost every $\omega \in \Omega$. 

Since $\varphi(\omega)$ is not injective for almost every $\omega$, there exist distinct points $x_\omega,y_\omega \in \bbS^1$ such that 
$$
\varphi(\omega)(x_\omega)=\varphi(\omega)(y_\omega) \ . 
$$
If we now lift such points, we get points $\widetilde{x}_\omega,\widetilde{y}_\omega \in \bbR$ and we can suppose $\widetilde{x}_\omega < \widetilde{y}_\omega < \widetilde{x}_\omega+1$. By the properties of the lift $\widetilde{\varphi}(\omega)$ is must hold either $\widetilde{\varphi}(\omega)(\widetilde{x}_\omega)=\widetilde{\varphi}(\omega)(\widetilde{y}_\omega)$ or $\widetilde{\varphi}(\omega)(\widetilde{x}_\omega)=\widetilde{\varphi}(\omega)(\widetilde{y}_\omega)+1$. In both cases, there exists an open interval $I(\omega) \subset (\widetilde{x}_\omega,\widetilde{x}_\omega+1)$ on which $\widetilde{\varphi}(\omega)$ is constant. By projecting such a set to the circle, there exists a not empty open interval on which $\varphi(\omega)$ is constant, for every $\omega \in \Omega$. We define
$$
U(\omega):=\{ x \in \bbS^1 \ | \ \textup{$x$ admits an open neighborhood $U$ such that $\varphi(\omega)|U$ is constant} \} \ .
$$
By definition this is an open set. Additionally it is not empty by what we have proved so far and it cannot coincide with the whole $\bbS^1$ otherwise $\varphi(\omega)$ would be constant (a contraditction to the surjectivity). 
Using Equation \eqref{eq:left:semicohomology:local}, it follows that
\begin{equation}\label{eq:sigma:equivariance}
U(\gamma\omega)=\sigma_2(\gamma,\omega)U(\omega) \ .
\end{equation}
Hence we can define a measurable function
$$
\Psi:\Omega \subset \Omega \rightarrow \mathcal{F}_0(\bbS^1) \ , \ \ \ \Psi(\omega):=\bbS^1 \setminus U(\omega) \ .
$$
By Equation \eqref{eq:sigma:equivariance} the above function $\Psi$ is $\sigma_2$-equivariant and by the minimality assumption on $\sigma_2$ we must have $\Psi(\omega)=\bbS^1$ for almost every $\omega \in \Omega$. But this is a contradiction to the fact that $U(\omega)$ is not empty for almost every $\omega \in \Omega$. Thus $\varphi(\omega)$ is injective and this concludes the proof. 
\end{proof}

\begin{oss}
Since we are talking about minimal cocycles and Euler classes, we want to point out a mistake made by the author together with Moraschini in \cite[Theorem 5]{moraschini:savini}. In that paper we introduce the notion of Euler invariant associated to a measurable cocycle of a hyperbolic lattice $\Gamma \leq \textup{PSL}(2,\bbR)$. Such an invariant is obtained by integrating along $\Omega$ the parametrized Euler class and pairing it with the fundamental class of the surface $\Gamma \backslash \bbH^2_{\bbR}$. 

In \cite[Theorem 5]{moraschini:savini} we claimed that the Euler invariant has bounded absolute value and it is maximal if and only if it is cohomologous to a hyperbolization. This statement is not true, and it should be modified by saying that maximal measurable cocycles are actually semicohomologous to a hyperbolization. Indeed in the proof we wrote that the twisting function which realizes the cohomology has image contained in $H$, but this is true only if the cocycle is minimal (as proved in Proposition \ref{prop:minimal:semicohomology}). 

This for instance happens when the measurable cocycle is associated to a integrable selfcouplings, as proved by Bader, Furman and Sauer \cite[Lemma 2.5]{sauer:articolo}. 
\end{oss}

\section{Vanishing of the real parametrized Euler class}\label{sec:real:euler:class}

In this section we are going to study the particular case when the parametrized Euler class vanishes as an essentially bounded real class. This means that we study the class 
$\textup{H}^2_b(\sigma)(e^b_{\bbR}) \in \textup{H}^2_b(\Gamma;\textup{L}^\infty(\Omega,\bbR))$. Imitating what happens for representations, we are going to show that the class $\textup{H}^2_b(\sigma)(e^b_{\bbR})$ vanishes if and only if the cocycle is semicohomologous to a measurable cocycle taking values into the rotations subgroup $\textup{Rot} \subset H$. 

Before studying the vanishing of parametrized classes, we want to introduce the notion of elementary measurable cocycle and see how it is related the fact that a cocycle takes values into $\textup{Rot}$.

\begin{deft}\label{def:elementary:cocycle}
Let $\Gamma$ be a finitely generated group and let $(\Omega,\mu)$ be a standard Borel probability $\Gamma$-space. A measurable cocycle $\sigma:\Gamma \times \Omega \rightarrow H$ is called \emph{elementary} if there exists a probability-valued map 
$$
\mu:\Omega \rightarrow \mathcal{M}^1(\bbS^1) \ ,
$$
which is weak-$^\ast$ measurable and $\sigma$-equivariant, that is 
$$
\mu(\gamma \omega)=\sigma(\gamma,\omega)_\ast\mu(\omega) \ .
$$
Here $\sigma(\gamma,\omega)_\ast$ denotes the push-forward action.
\end{deft}

As noticed by Bader, Furman and Shaker \cite{BFS06}, we can call a measurable cocycle $\sigma$ elementary if it takes values into a elementary subgroup of $H$. We are going to see which consequences elementarity has on the dynamics determined by a measurable cocycle.

\begin{prop}\label{prop:consequence:elementary}
Let $\Gamma$ be a finitely generated group and let $(\Omega,\mu)$ be an ergodic standard Borel probability $\Gamma$-space. Suppose that $\sigma$ is elementary. Then either there exists a measurable $\sigma$-equivariant map 
$$
F: \Omega \rightarrow \bbS^1_k \ ,
$$
where $\bbS^1_k$ denotes the collection of subsets of $\bbS^1$ with $k$ points, or $\sigma$ is semicohomologous to a cocycle into the rotations subgroup $\textup{Rot}$. 
\end{prop}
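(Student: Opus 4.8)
The plan is to transplant the classical Ghys-type alternative for measure-preserving circle actions — a finite orbit, or semiconjugacy to an action by rotations — into the parametrized setting, using $\Gamma$-ergodicity of $\Omega$ to promote pointwise dichotomies to global ones. Let $\mu:\Omega\to\mathcal M^1(\bbS^1)$ be the weak-$\ast$ measurable $\sigma$-equivariant field provided by elementarity. First I would look at the maximal atom weight $m(\omega):=\sup_{x\in\bbS^1}\mu(\omega)(\{x\})$, which is actually attained (atoms of weight $\ge m(\omega)/2$ are finitely many) and which vanishes precisely when $\mu(\omega)$ is non-atomic. Since every $\sigma(\gamma,\omega)$ is a homeomorphism it carries atoms to atoms of equal weight, so $m(\gamma\omega)=m(\omega)$; by ergodicity $m\equiv c$ for some constant $c\in[0,1]$, and the proof splits according to whether $c>0$ or $c=0$.

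If $c>0$, then for a.e.\ $\omega$ the set $A(\omega):=\{x\in\bbS^1:\mu(\omega)(\{x\})=c\}$ is non-empty and finite, of cardinality at most $\lfloor 1/c\rfloor$, and $A(\gamma\omega)=\sigma(\gamma,\omega)A(\omega)$. A measurable selection argument gives that $\omega\mapsto A(\omega)$ is a measurable map into $\mathcal F_0(\bbS^1)$, and applying ergodicity to the ($\Gamma$-invariant, since the $\sigma(\gamma,\omega)$ are bijections) integer-valued function $\omega\mapsto\lvert A(\omega)\rvert$ shows $\lvert A(\omega)\rvert=k$ is a.e.\ constant. Then $F:=A$ is a measurable $\sigma$-equivariant map $\Omega\to\bbS^1_k$, which is the first alternative.

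Now assume $c=0$, so $\mu(\omega)$ is non-atomic for a.e.\ $\omega$. Identify $\bbS^1=\bbR/\bbZ$ with covering projection $\pi$, let $\widetilde\nu_\omega$ be the $\bbZ$-periodic measure on $\bbR$ lifting $\mu(\omega)$ with mass one on a fundamental domain, and define the distribution function $D_\omega:\bbR\to\bbR$ by $D_\omega(x):=\widetilde\nu_\omega([0,x))$ for $x\ge 0$, extended so that $D_\omega(x+1)=D_\omega(x)+1$. Non-atomicity makes $D_\omega$ continuous; it is also non-decreasing, surjective and commutes with $x\mapsto x+1$, so it descends to a continuous non-decreasing degree one map $\varphi(\omega):\bbS^1\to\bbS^1$, and $\omega\mapsto\varphi(\omega)$ is measurable. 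Using that $\overline s(\sigma(\gamma,\omega))_\ast\widetilde\nu_\omega=\widetilde\nu_{\gamma\omega}$ one checks that $D_{\gamma\omega}\circ\overline s(\sigma(\gamma,\omega))-D_\omega$ is a (measurable) constant $c(\gamma,\omega)\in\bbR$; passing to the circle this becomes $\varphi(\gamma\omega)\,\sigma(\gamma,\omega)=\tau(\gamma,\omega)\,\varphi(\omega)$, where $\tau(\gamma,\omega)\in\textup{Rot}$ is the rotation by $c(\gamma,\omega)\bmod 1$. Surjectivity of $\varphi(\omega)$ forces $\tau$ to satisfy the cocycle identity \eqref{eq:measurable:cocycle}, so $\tau:\Gamma\times\Omega\to\textup{Rot}$ is a measurable cocycle and $\tau$ is left semicohomologous to $\sigma$.

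It remains to close the case $c=0$. If $\tau$ admits no equivariant family of points, then Corollary \ref{cor:leftsemicohomology:semicohomology} (applied with $\sigma_1=\tau$ and $\sigma_2=\sigma$, using ergodicity) yields that $\tau$, hence $\sigma$, is semicohomologous to the $\textup{Rot}$-valued cocycle $\tau$ — the second alternative. If instead there is a $\tau$-equivariant measurable $q:\Omega\to\bbS^1$, I would consider $\Psi(\omega):=\varphi(\omega)^{-1}(q(\omega))$: since $\varphi(\omega)$ is a continuous monotone degree one map, $\Psi(\omega)$ is a non-empty proper closed arc or a single point, it depends measurably on $\omega$, and the intertwining relation gives $\sigma(\gamma,\omega)\Psi(\omega)=\Psi(\gamma\omega)$. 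Applying ergodicity to the $\Gamma$-invariant Borel locus where $\Psi(\omega)$ is a singleton, either $\omega\mapsto\Psi(\omega)$ is a $\sigma$-equivariant map into $\bbS^1_1$, or $\omega\mapsto\partial\Psi(\omega)$ is a $\sigma$-equivariant map into $\bbS^1_2$; in both cases we land in the first alternative. The main obstacle is precisely the non-full-support situation inside the case $c=0$: there $\varphi(\omega)$ need not be injective, so there is no canonical way to invert it and produce a two-sided semicohomology by hand, and one is forced to route the argument through Corollary \ref{cor:leftsemicohomology:semicohomology} while separately recognising that the degenerate possibility (a $\tau$-invariant section) actually feeds back into the finite-orbit alternative. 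The measurability of the closed-set-valued fields $A(\cdot)$, $\Psi(\cdot)$ and $\partial\Psi(\cdot)$, and of $\omega\mapsto D_\omega$, is routine measurable-selection bookkeeping and should be recorded but poses no real difficulty.
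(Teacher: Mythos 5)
Your argument is correct, and its core follows the same route as the paper: split according to whether $\mu(\omega)$ has atoms (your constant $c=m(\omega)$ is positive exactly on the paper's set $A_0=\{\omega:\mu_a(\omega)\neq 0\}$, and in the atomic case both proofs take the finitely many atoms of maximal weight), and in the non-atomic case both use the distribution function $x\mapsto\mu(\omega)[0,x)$ to intertwine $\sigma$ with a rotation cocycle. Where you genuinely diverge is at the very end. The paper's computation only produces the one-sided relation $\tau(\gamma,\omega)\varphi(\omega)=\varphi(\gamma\omega)\sigma(\gamma,\omega)$ and then simply asserts that $\sigma$ is semicohomologous to $\tau$, even though Definition \ref{def:semicohomology} requires both a left and a right semicohomology; you correctly flag this and close it, either by invoking Corollary \ref{cor:leftsemicohomology:semicohomology} when $\tau$ has no equivariant family of points, or by showing that a $\tau$-equivariant section pulls back under $\varphi(\omega)^{-1}$ to an equivariant one- or two-point set, so that the degenerate case lands in the first alternative anyway. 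This is a legitimate repair and arguably more careful than the printed proof. For what it is worth, the case split can be avoided altogether: since $\varphi(\omega)$ is continuous and surjective, the generalized inverse $\widetilde\psi(\omega)(y):=\sup\{x:\widetilde\varphi(\omega)(x)\le y\}$ is again a measurable family of non-decreasing degree one maps and satisfies $\sigma(\gamma,\omega)\psi(\omega)=\psi(\gamma\omega)\tau(\gamma,\omega)$, which supplies the missing direction directly and keeps the conclusion in the rotation branch in all non-atomic cases.
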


\begin{proof}
For almost every $\omega \in \Omega$ we can decompose $\mu(\omega)$ as
$$
\mu(\omega)=\mu_a(\omega)+\mu_c(\omega) \ ,
$$
where $\mu_a$ denotes the atomic part and $\mu_c$ denotes the continuous part. Additionally, the $\sigma$-equivariance of $\mu$ implies that 
$$
\mu_a(\gamma \omega)=\sigma(\gamma,\omega)_\ast\mu_a(\omega) \ ,
$$
where $\sigma(\gamma,\omega)_\ast$ denotes the push-forward measure. This tells us that the set
$$
A_0:=\{ \omega \in \Omega \ | \ \mu_a(\omega) \neq 0 \} \ ,
$$
is a $\Gamma$-invariant measurable set and hence it has either full or null measure, by the ergodicity of $\Omega$. Suppose that it has full measure. We can define
$$
F:\Omega \rightarrow \bbS^1_k \ , \ \ F(\omega):=\{ x \in \bbS^1 \ | \ \mu_a(\omega)(x)=\max_{y \in \bbS^1} \mu_a(\omega)(y) \}  \ .
$$
Clearly $F(\omega)$ is a set of $k$ distinct points for some integer $k \geq 1$ and it is $\sigma$-equivariant by the equivariance of $\mu_a$. Notice that the fact that $k$ does not depend on $\omega \in \Omega$ relies once again on the ergodicity of the $\Gamma$-action. 

Suppose now that $A_0$ has null measure. This means that $\mu_a(\omega)=0$ and $\mu(\omega)=\mu_c(\omega)$ for almost every $\omega$. Identifying $\bbS^1 = \bbR / \bbZ$ we can define
$$
\varphi:\bbR/\bbZ \times \Omega \rightarrow \bbR / \bbZ \ , \ \ f(x,\omega):=\mu(\omega)[0,x) \ \mod \bbZ \ ,
$$
The map $f(\omega)(x):=f(x,\omega)$ is a non-decreasing degree one map which realizes the desired semiconjugacy. Indeed 
\begin{align*}
f(\gamma \omega)(\sigma(\gamma,\omega)(x))&=\mu(\gamma \omega)[0,\sigma(\gamma,\omega)(x)) \mod \bbZ=\\
&=\sigma(\gamma,\omega)_\ast\mu(\omega)[0,\sigma(\gamma,\omega)(x)) \mod \bbZ=\\
&=\mu(\omega)[\sigma(\gamma,\omega)^{-1}0,x) \mod \bbZ=\\
&=f(\omega)(x) -\mu(\omega)[0,\sigma(\gamma,\omega)^{-1}0) \mod \bbZ \ ,
\end{align*}
where we exploited that $\mu$ is $\sigma$-equivariant to move from the first line to the second one and then we concluded using the definition of push-forward measure. Thus $\sigma$ is semicohomologous to the cocycle
$$
\sigma_0:\Gamma \times \Omega \rightarrow \textup{Rot} \ , \ \ \sigma_0(\gamma,\omega)(x \mod \bbZ):=x-\mu(\omega)[0,\sigma(\gamma,\omega)^{-1}0) \mod \bbZ
$$
for $x \in \bbR$ and the claim is proved.
\end{proof}

The previous proposition has a cohomological counterpart. In fact the vanishing of the parametrized Euler class can detect those elementary cocycles taking values exactly into the rotations subgroup. 

\begin{proof}[Proof of Proposition \ref{prop:vanishing:real}]
We start supposing that $\textup{H}^2_b(\sigma)(e^b_{\bbR}) \in \textup{H}^2_b(\Gamma;\textup{L}^\infty(\Omega,\bbR))$ vanishes. Then there exists an essentially bounded function $u:\Gamma \times \Omega \rightarrow \bbR$ such that 
\begin{equation}\label{eq:real:coboundary}
\overC^2_b(\sigma)(\overline{\varepsilon})(\gamma,\lambda)(\omega)=\overline{\delta}^1u(\gamma,\lambda)(\omega) \ .
\end{equation}
Composing $u$ with the projection $\pi:\bbR \rightarrow \bbR / \bbZ$ we can consider the function $f:\Gamma \times \Omega \rightarrow \bbR / \bbZ$. As a consequence of Equation \eqref{eq:real:coboundary} we get that 
\begin{equation}\label{eq:reduced:measurable:cocycle}
\overline{\delta}^1(f)(\gamma,\omega)=0 \ \mod \ \bbZ \ ,
\end{equation}
that is $f$ is a cocycle. Thus, if we consider the measurable map
$$
\sigma_0:\Gamma \times \Omega \rightarrow \textup{Rot} \subset H \ , \ \ \ \sigma_0(\gamma,\omega)(x):=x+f(\gamma^{-1},\omega) \ ,
$$
Equation \eqref{eq:reduced:measurable:cocycle} implies that $\sigma_0$ is a measurable cocycle in the sense of Equation \eqref{eq:measurable:cocycle}. 

We want to compute the pullback of the Euler cochain along such a measurable cocycle. To do this, let $\overline{s}:H \rightarrow \widetilde{H}$ the section defined by $\overline{s}(f)(0) \in [0,1)$. By what we have said so far it holds
$$
\overline{s}(\sigma_0(\gamma^{-1},\omega)^{-1})(x)=x-\{ u(\gamma,\omega) \} \ ,
$$
where $\{u(\gamma,\omega)\}$ is the fractionary part of $u(\gamma,\omega)$. Thus if we define 
$$
\{ u \}:\Gamma \times \Omega \rightarrow \bbR \ , \ \ \{ u \}(\gamma,\omega):=\{ u(\gamma,\omega) \} \ ,
$$
we get that
$$
\overC^2_b(\sigma_0)(\overline{\varepsilon})(\gamma,\lambda)(\omega)=\overline{\delta}^1(\{u\})(\gamma,\lambda)(\omega)\ .
$$
Hence we can rewrite Equation \eqref{eq:real:coboundary} as
$$
\overC^2_b(\sigma)(\overline{\varepsilon})(\gamma,\lambda)(\omega)=\overC^2_b(\sigma_0)(\overline{\varepsilon})(\gamma,\lambda)(\omega)+\overline{\delta}^1(\lfloor u \rfloor)(\gamma,\lambda)(\omega) \ ,
$$
where $\lfloor u \rfloor(\gamma, \omega):=\lfloor u(\gamma,\omega) \rfloor$ is the integer part of $u(\gamma,\omega)$. This means that 
$$
\textup{H}^2_b(\sigma)(e^b_{\bbZ})=\textup{H}^2_b(\sigma_0)(e^b_{\bbZ}) \ ,
$$
and by Theorem \ref{teor:ghys:pullback:semicohomology} it follows that $\sigma_0$ and $\sigma$ are semicohomologous. 

Viceversa, suppose that there exists a measurable cocycle $\sigma_0:\Gamma \times \Omega \rightarrow \textup{Rot} \subset H$ taking values into the rotations subgroup and such that $\sigma_0$ and $\sigma$ are semicohomologous. By Theorem \ref{teor:main:intro} we know that $\textup{H}^2_b(\sigma)(e^b_{\bbZ})=\textup{H}^2_b(\sigma_0)(e^b_{\bbZ})$, hence it follows $\textup{H}^2_b(\sigma)(e^b_{\bbR})=\textup{H}^2_b(\sigma_0)(e^b_{\bbR})$. It is sufficient to show that $\textup{H}^2_b(\sigma_0)(e^b_{\bbR})=0$ to get the statement. 

Since $\sigma_0$ takes values into the rotations subgroup and the latter is compact, $e^b_{\bbR}|_{\textup{Rot}}=0$, whence the statement. 
\end{proof}

We conclude the section by studying some application of the previous propositions. We will focus our attention on lattices in products and higher rank Lie groups. We will obtain in a different way results that are similar to both \cite[Theorem 1.10]{WZ01} for higher rank lattices and to \cite[Theorem E]{BFS06} in the case of products.

Let $G_i$ be a locally compact group for $i=1,\cdots,k$ and let $\Gamma \leq \prod_{i=1}^k G_i$ be a lattice. We recall that $\Gamma$ is called \emph{irreducible} if the image $p_i(\Gamma)$ in $G$ is dense. Here $p_i:G \rightarrow G_i$ is the $i$-th projection. Similarly, given a standard probability $G$-space $\Omega$, we say that $\Omega$ is \emph{irreducible} if $G'_i:=\prod_{j \neq i} G_j$ acts ergodically on $\Omega$ for every $i=1,\cdots,k$. 

\begin{proof}[Proof of Theorem \ref{teor:lattice:product:elementary}]
By Proposition \ref{prop:vanishing:real} it is sufficient to show that $\textup{H}^2_b(\sigma)(e^b_{\bbR})=0$. We are going to show that 
$$
\textup{H}^2_b(\Gamma;\textup{L}^\infty(\Omega,\bbR))=0 \ ,
$$
and the statement will follow. By \cite[Corollary 9]{burger2:articolo} we have an injection 
$$
\textup{H}^2_b(\Gamma;\textup{L}^\infty(\Omega,\bbR)) \rightarrow \textup{H}^2_b(\Gamma;\textup{L}^2(\Omega,\bbR)) \ ,
$$
induced by the inclusion of the coefficients. Hence it is sufficient to show that $\textup{H}^2_b(\Gamma;\textup{L}^2(\Omega,\bbR))$ vanishes. 

Denote $G'_i:=\prod_{j \neq i} G_j$. As a consequence of \cite[Theorem 16]{burger2:articolo} it holds 
$$
\textup{H}^2_{cb}(\Gamma;\textup{L}^2(\Omega,\bbR)) \cong \bigoplus_{i=1}^k \textup{H}^2_{cb}(G_i; \textup{L}^2(\Omega;\bbR)^{G'_i}) \cong \bigoplus_{i=1}^k \textup{H}^2_{cb}(G_i;\bbR) \ ,
$$
where the second isomorphism holds because of the irreducibility of the space $\Omega$. Since we assumed the vanishing $\textup{H}^2_{cb}(G_i;\bbR)$, this concludes the proof. 
\end{proof}

A similar behaviour holds in the case of higher rank lattices.

\begin{proof}[Proof of Theorem \ref{teor:higher:rank:elementary}]
As before, we want to exploit Proposition \ref{prop:vanishing:real} to show that $\textup{H}^2_b(\sigma)(e^b_{\bbR})=0$. We are going to show that 
$$
\textup{H}^2_b(\Gamma;\textup{L}^\infty(\Omega,\bbR))=0 \ .
$$
As in the previous case, the inclusion of coefficients induces an injection 
$$
\textup{H}^2_b(\Gamma;\textup{L}^\infty(\Omega,\bbR)) \rightarrow \textup{H}^2_b(\Gamma;\textup{L}^2(\Omega,\bbR)) \ ,
$$
by \cite[Corollary 9]{burger2:articolo}. Thanks to \cite[Corollary 1.6]{Mon10} we know that 
$$
\textup{H}^2_b(\Gamma;\textup{L}^2(\Omega,\bbR)) \cong \textup{H}^2_b(\Gamma;\textup{L}^2(\Omega,\bbR)^\Gamma) \cong \textup{H}^2_b(\Gamma;\bbR) \ ,
$$
by the $\Gamma$-ergodicity of $\Omega$. By \cite[Theorem 21]{burger2:articolo} the comparison map 
$$
\textup{comp}^2_\Gamma:\textup{H}^2_b(\Gamma;\bbR) \rightarrow \textup{H}^2_b(\Gamma;\bbR) \ ,
$$
is injective. Since we assumed $\textup{H}^2(\Gamma;\bbR)=0$, this concludes the proof. 
\end{proof}

\bibliographystyle{amsalpha}

\bibliography{biblionote}

\end{document}